\documentclass[reqno,a4paper,12pt]{amsart}

\usepackage[
  hmarginratio={1:1},     
  vmarginratio={1:1},     
  textwidth=148mm,        
  textheight=220mm,       
  marginparwidth=27mm,
  marginparsep=3mm
]{geometry}

\usepackage[no-math]{fontspec}
\usepackage[cachedir=minted-cache]{minted}

\newmintinline[lean]{lean4}{bgcolor=white}
\newminted[leancode]{lean4}{escapeinside=!!,
                            breaklines,
                            fontsize=\normalsize}
\numberwithin{equation}{section}
\usepackage{amsmath, amsthm, amssymb, amscd, accents, bm, amsfonts}
\usepackage{mathtools}
\usepackage{url}
\usepackage{mathrsfs,dsfont}
\usepackage{mathtools}
\usepackage{datetime}
\usepackage{hyperref}
\usepackage{colonequals}
\usepackage{enumerate}
\usepackage{xcolor}
\usepackage[sort,nocompress]{cite}

\mathtoolsset{showonlyrefs}

\newtheorem{definition}{Definition}[section]
\newtheorem{theorem}[definition]{Theorem}
\newtheorem*{theorem*}{Theorem}
\newtheorem{lemma}[definition]{Lemma}

\newtheorem{proposition}[definition]{Proposition}

\newtheorem{remark}[definition]{Remark}

\def\N{{\mathbb N}}
\def\Z{{\mathbb Z}}
\def\R{{\mathbb R}}

\def\C{{\mathbb C}}
\def\Q{{\mathbb Q}}

\newcommand{\supp}{{\mathrm{supp}}}
\newcommand{\lt}{{L^2(\R)}}

\newcommand{\ft}{{\mathcal{F}}}

\allowdisplaybreaks

\newcommand{\norm}[1]{\left\lVert #1 \right\rVert}
\newcommand{\abs}[1]{\left\lvert #1 \right\rvert}
\newcommand{\dist}{\operatorname{dist}}

\newcommand{\cG}{\mathcal{G}}
\NewDocumentCommand{\preG}{ o o }{%
  \IfNoValueTF{#1}{%
    G
  }{%
    \IfNoValueTF{#2}{%
      G_{#1}
    }{%
      G_{{#1(#2)}}
    }%
  }%
}

\begin{document}

\title[Gabor Frames of Totally Positive Functions]{Gabor Frames  of
  Totally Positive Functions: \\ A Complete Characterization}

\author[J. de Dios Pont]{Jaume de Dios Pont}
\address{Center for Data Science, New York University, New York, New York 10011, USA}
\email{jdedios@nyu.edu}

\author[K. Gröchenig]{Karlheinz Gröchenig}
\address{Faculty of Mathematics, University of Vienna, Oskar-Morgenstern-Platz 1, A-1090 Vienna, Austria}
\email{karlheinz.groechenig@univie.ac.at}

\author[L. Liehr]{Lukas Liehr}
\address{Department of Mathematics, Bar-Ilan University, Ramat-Gan 5290002, Israel}
\email{lukas.liehr@biu.ac.il}

\author[I. Shafkulovska]{Irina Shafkulovska}
\address{Faculty of Mathematics, University of Vienna, Oskar-Morgenstern-Platz 1, A-1090 Vienna, Austria}
\email{irina.shafkulovska@univie.ac.at}

\author[M. A. Taylor]{Mitchell A. Taylor}
\address{ Department of Mathematics, ETH Z\"urich, Ramistrasse 101, 8092 Z\"urich, Switzerland}
\email{mitchell.taylor@math.ethz.ch}

\date{\today}
\subjclass[2020]{42A82, 42C15, 47A53}
\keywords{Gabor frames, totally positive functions, Fredholm
  operators, limit operators, Zak transform}

\begin{abstract}
We prove that the set of time-frequency shifts
$$\left \{ e^{2\pi i \beta lt }g(t-\alpha k) : k,l\in \Z \right \}$$ with a continuous,
integrable  totally positive function $g$  and lattice parameters $\alpha ,\beta
>0$ generates a frame for $L^2(\R)$ if and
only if $\alpha \beta <1$. This fully settles the so-called  frame set problem for the
class of totally positive functions. As a closely related result we
prove a  sharp Kadets-type theorem for every shift-invariant space
generated by  a continuous totally positive function. The proofs are  based on
Fredholm theory and limit-operator theory. A formalization of our main
 result in Lean 4 is also provided. 
\end{abstract}

\maketitle

\section{Introduction and results}

\subsection{Gabor frames}
For a measurable function $g: \R \to \C$ and positive scalars
$\alpha,\beta>0$, the Gabor system $\cG(g,\alpha,\beta)$ is defined by
\begin{equation}\label{eq:def:cG}
    \cG(g,\alpha,\beta)=\left \{ e^{2\pi i \beta lt }g(t-\alpha k) :k,l\in \Z \right \}.
\end{equation}
The set  $\cG(g,\alpha,\beta)$ consists of a structured collection of time-frequency shifts of $g$, i.e., each element of $\cG(g,\alpha,\beta)$ arises from an application of the translation operator $T_x f(t) = f(t-x)$ and the modulation operator $M_\xi f(t) = e^{2\pi i \xi t} f(t)$.
If $g\in L^2(\R)$, we say that $\cG(g,\alpha,\beta)$ is a \emph{Gabor
frame} if there exist constants $0<A\leq B<\infty$ such that for every $f \in \lt$ one has
$$
A\norm{f}^2
\leq
\sum_{k,l\in \Z}\abs{\langle f,M_{\beta l}T_{\alpha k}g\rangle}^2 \leq
B\norm{f}^2,
$$
where $\langle \cdot ,\cdot \rangle$ denotes the $L^2$-inner product
and $\| \cdot  \|$ the $L^2$-norm.

Note that the lower frame inequality implies  that the
set $\{ e^{2\pi i \beta lt }g(t-\alpha k) :k,l\in \Z  \}$ is
complete in $L^2(\R )$. Questions about the spanning properties of
time-frequency shifts go back to von Neumann's ``Foundations of
Quantum Mechanics''~\cite{neumann} and to  Gabor's ``Theory of
Communication''~\cite{gabor}. More recently, Gabor frames have played an
important role in the processing of acoustic and audio signals, and
the frame property, as opposed to completeness, has been brought
center-stage by Daubechies' influential
work~\cite{daubechies2002wavelet}.

A central topic in time-frequency analysis is the characterization of
those $g$ and $\alpha,\beta$ for which $\cG(g,\alpha,\beta)$ forms a
frame \cite{Groechenig2001,heil2007history,mystery}. This problem amounts to determining the \emph{frame set} of $g$ which is defined by
$$
\ft(g) = \left \{ (\alpha,\beta) \in \R_+^2 : \cG(g,\alpha,\beta) \ \text{is a frame} \right \} ,
$$
where $\R_+ = \{ x \in \R : x > 0 \}$. The density theorem for Gabor
frames asserts that $\ft (g) \subseteq \{ (\alpha,\beta) \in \R_{+}^2
: \alpha\beta \leq  1 \}$ for all $g\in L^2(\R )$ and, with a harmless
assumption on the decay and smoothness of $g$, one even knows that
\begin{equation}  \label{eq:c5}
\ft (g) \subseteq \{ (\alpha,\beta) \in \R_{+}^2 : \alpha\beta < 1
\}\, .  
\end{equation}
See~\cite{heil2007history} for a comprehensive discussion of the
density theorem and the Balian-Low theorem for Gabor frames. 

To this day, the frame set has been determined for  only a few classes
of functions. Of particular importance was 
an influential conjecture of Daubechies saying that if both $g$ and
its Fourier transform are positive, then the Gabor system
$\mathcal{G}(g,\alpha, \beta)$ is a frame whenever $\alpha\beta<1$
\cite[p.~981]{daubechies2002wavelet}. In other words,  the frame set is maximal,
and the necessary condition~\eqref{eq:c5}  is  also sufficient.  Although this
conjecture was shown to be false in general by Janssen
\cite{janssen2002some}, it initiated extensive research on identifying
function classes where the frame set  equals the maximal  set $\{
(\alpha,\beta) \in \R_{+}^2 : \alpha\beta < 1 \}$, which we will
abbreviate by $\mathcal{M} = \{
(\alpha,\beta) \in \R_{+}^2 : \alpha\beta < 1 \}$ 
\cite{belov2023gabor,Groechenig2023,grochenig2018sampling,GroechenigStoeckler2013,Janssen1996,janssen2002hyperbolic}.

\subsection{Totally positive functions}
An important step towards the deeper understanding of the frame set  was the insight from ~\cite{GroechenigStoeckler2013} that the appropriate
notion of positivity in the study of Gabor frames was \emph{total
  positivity} of  functions. A function $g:\R\to \R$ is called
\emph{totally positive} if it is not constant and for every 
$n\in \N$ and every
$x_1<\dots<x_n$ and
$y_1<\dots<y_n$, the determinant of the matrix $M \in \R^{n \times n}$
with entries $M_{jk} = g(x_j-y_k)$ satisfies $\det M \geq 0$.  Every
integrable  totally positive function  has exponential decay
\cite{Schoenberg1947}. Assuming also continuity,  their frame set is
then  automatically  contained in the open set $\mathcal{M}$.

Totally positive functions occur in many branches of mathematics,
e.g., spline theory, statistics, the representation of
infinite-dimensional groups, and also in sampling theory and
time-frequency analysis. General references are the books by Karlin
\cite{karlin1968total} and  Pinkus
\cite{pinkus2010totally}, and a survey of their role in time-frequency
analysis and sampling is contained in ~\cite{grosurvey22}. 

\subsection{Main result}
The study of frame sets for totally positive functions has been the
subject of a series of papers
\cite{grochenig2018sampling,Groechenig2023,GroechenigStoeckler2013,Janssen1996,Seip1992,lyubarskiiframes}
and draws  on techniques from several areas of mathematics, including harmonic analysis, functional analysis, and complex analysis. Gabor systems associated with totally positive functions were also investigated in several related contexts \cite{bannert2013discretized,grochenig2016completeness,kloos2016implementation,ghosh2025gabor}, including discrete settings, completeness problems, algorithmic aspects, and quantitative estimates for frame bounds.
Central to this line of research is the following conjecture 
in~\cite{mystery,GroechenigStoeckler2013}: for every continuous
totally positive function in $L^1(\R)$, the frame set coincides with
$\mathcal{M} = \left \{ (\alpha,\beta) \in \R_{+}^2 : \alpha\beta < 1  \right
\}$. This statement is often  referred to as the \emph{frame set
  conjecture for totally positive functions}. 

In this  paper, we prove the frame set conjecture for totally positive functions in full generality.

\begin{theorem}\label{thm:main}
The frame set of every continuous totally positive function $g \in
L^1(\R) $ is given by $\ft(g) = \left \{ (\alpha,\beta) \in \R_{+}^2 :
  \alpha\beta < 1  \right \}$. This means that $\mathcal{G}
(g,\alpha,\beta )$ is a frame if and only if $\alpha \beta <1$. 
\end{theorem}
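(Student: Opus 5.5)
The necessity direction is already settled. Since $g$ is continuous, integrable and totally positive, it decays exponentially \cite{Schoenberg1947}, and its Fourier transform is smooth. The density theorem in its strong form \eqref{eq:c5} therefore gives $\ft(g)\subseteq\mathcal M$. The real work is sufficiency: for every $\alpha\beta<1$ we must show that $\cG(g,\alpha,\beta)$ is a frame.

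\textbf{Reduction to sampling.} I would first rescale so that $\beta=1$ and $\alpha<1$ is arbitrary. Consider the shift-invariant space
\[
V^2(g)=\Bigl\{\textstyle\sum_k c_k\, g(\cdot-k): c\in\ell^2(\Z)\Bigr\}.
\]
By the standard duality (Ron--Shen / Zak transform), the frame property of $\cG(g,\alpha,1)$ is equivalent to a uniform sampling statement. One version reads: for every $x\in\R$, the set $x+\alpha\Z$ is a sampling set for $V^2(g)$, with bounds uniform in $x$. Equivalently, for every $x$ the pre-Gramian operator
\[
P_x\colon \ell^2(\Z)\to \ell^2(\Z),\qquad (P_x c)_j=\sum_k c_k\, g(x+\alpha j-k),
\]
must be bounded below, uniformly in $x$. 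By exponential decay, $P_x$ is a bi-infinite band-dominated matrix.

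\textbf{Fredholm and limit operators.} I would prove uniform lower boundedness in three steps.

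First, total positivity gives injectivity. Since every minor of $(g(x_j-y_k))$ is nonnegative, the Schoenberg--Whitney type results for totally positive kernels, combined with the sign-change (variation-diminishing) property, show the following: an element $\sum c_k g(\cdot-k)$ of $V^2(g)$ that vanishes on a set denser than $\Z$ must be zero. So $P_x$ is injective for each $x$. The density $1/\alpha>1$ is what is used here.

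Second, I need closed range, which injectivity alone does not give. For this I would use limit-operator theory for band-dominated operators (Rabinovich--Roch--Silbermann, Lindner). The operator $P_x$ is Fredholm, or at least bounded below, if all of its limit operators are invertible, or respectively injective with uniform bounds. The limit operators of $P_x$ along sequences $h_n\to\pm\infty$ are again operators of the form $P_{x'}$, because $\alpha j\bmod 1$ accumulates. So the family $\{P_x\}$ is closed under taking limit operators. Applying injectivity to every member of this family then gives that each $P_x$ is bounded below. This uses the theorem that a band-dominated operator all of whose limit operators are injective on $\ell^\infty$ is bounded below on $\ell^2$ (Lindner--Seidel).

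Third, uniformity in $x$ follows from compactness in $x$: periodicity modulo $\alpha$ reduces to $x\in[0,\alpha]$, and the lower bound is lower semicontinuous in $x$.

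\textbf{Main obstacle.} The hard step is the injectivity needed in the limit-operator criterion. That criterion requires triviality of the kernel on $\ell^\infty$, not merely on $\ell^2$. Concretely, one must show that a bounded sequence $c$ with $\sum_k c_k\, g(x+\alpha j-k)=0$ for all $j$ must vanish. For bounded sequences, the variation-diminishing argument has to be run on finite sections with careful counting. On any interval of length $L$, the function $f=\sum c_k g(\cdot-k)$ has about $L/\alpha$ prescribed zeros but only about $L$ effective coefficients. The sign changes of $c$ bound the zeros of $f$, up to boundary terms controlled by exponential decay. I would first try truncating $c$ to $[-N,N]$ and applying the total-positivity zero-counting bound. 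The density surplus $(1/\alpha-1)N$ would then absorb the $O(1)$ boundary error terms, forcing $c=0$.
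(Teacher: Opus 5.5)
The gap is in your first step, and that step carries the whole theorem. Your limit-operator reduction in the second and third steps is essentially the known characterization of Gr\"ochenig--Romero--St\"ockler \cite{GroechenigEtAl2017}: $\cG(g,\alpha,1)$ is a frame if and only if every $x+\alpha\Z$ is a uniqueness set for $V^\infty(g)$. Because this is an equivalence, the reduction only restates the problem. What remains is to show that a bounded $c$ with $\sum_k c_k g(x+\alpha j-k)=0$ for all $j$ must vanish. For general totally positive $g$ and irrational $\alpha$ this was exactly the open part of the conjecture. It was known for Gaussian type via complex-analytic zero counting, and for finite type via Schoenberg--Whitney, which uses the local finite-dimensional structure of those $g$.

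No Schoenberg--Whitney-type result gives it. Such results characterize nonsingular finite sections, and nonsingularity of finite sections says nothing about the kernel of the bi-infinite matrix. For the Gaussian, every ordered finite section is nonsingular regardless of density.

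Your truncation plan also fails, for two reasons.
\begin{enumerate}[(i)]
\item After cutting $c$ to $[-N,N]$, the function $f_N$ no longer vanishes on $x+\alpha\Z$. Small values carry no sign-change information.
\item More seriously, variation diminishing bounds sign changes of $f$, not zeros, and the boundary contributions to a zero count are not $O(1)$. Take $g(t)=e^{-|t|}$ and $c_0=1$, $c_{-1}=-e$, $c_k=0$ otherwise. Then $f(t)=e^{-t}-e\cdot e^{-t-1}=0$ for all $t\ge 0$. So a sequence with one sign change produces a function vanishing at infinitely many points of $x+\alpha\Z$. A window of length $L$ to the right of the support contains about $L/\alpha$ zeros and no coefficients at all.
\end{enumerate}

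The missing idea is to let total positivity supply surjectivity rather than injectivity, and to obtain injectivity from operator theory. The paper's chain of steps is:
\begin{enumerate}[(i)]
\item Select a square subsystem $\{k+\delta_k\}\subseteq x+\alpha\Z$ with $\delta_k\in[x_0-1+\varepsilon,x_0-\varepsilon]$, away from the zero of $Zg$.
\item $G_\delta$ is then surjective by the de Boor--Friedland--Pinkus argument (Proposition~\ref{prop:surjective}), and hence Fredholm (Theorem~\ref{thm:surjectivity_implies_Fredholm}).
\item Choosing $\delta$ recurrent (Lemma~\ref{lem:good_delta_exists}) makes $G_\delta$ self-similar (Proposition~\ref{thm:preG_selfsim}).
\item Every limit operator of a Fredholm operator is injective (Theorem~\ref{thm:selfsimilarinjective}), so $G_\delta$ itself is injective.
\item $G_\delta$ is therefore invertible on $\ell^1(\Z)$, and by spectral invariance on $\ell^\infty(\Z)$. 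Proposition~\ref{prop:submatrix} then gives the frame property.
\end{enumerate}
This also yields the uniqueness statement you were after: the rows of $G_\delta$ are rows of $P_x$, so $\ker P_x\subseteq\ker G_\delta=\{0\}$ on $\ell^\infty(\Z)$.
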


Note that the only discontinuous totally positive functions in
$L^1(\R)$ are one-sided exponentials, for which the frame set was
determined in early work by Janssen \cite{Janssen1996} to be $\ft (g)
= \{
(\alpha,\beta) \in \R_{+}^2 : \alpha\beta \leq 1 \}$.  

\subsection{Previous work}
Theorem~\ref{thm:main} has a long history. It  has been established
under various additional assumptions, leading to a number of partial
results in several important special cases.
The first fundamental result is the determination of the frame set for
the Gaussian $\varphi(t) = e^{-\pi t^2}$  as $\ft(\varphi) =
\mathcal{M} $ by  Seip-Wallstén and
Lyubarskii~\cite{lyubarskiiframes,Seip1992,seip-wallsten}. The
proof is based on the  translation into an equivalent  sampling
problem in Fock space and techniques for entire functions. In fact,
Seip-Wallstén and Lyubarskii proved a more general result for
arbitrary sets of time-frequency shifts.

In the sequel, a few  other isolated  examples of functions with the maximal
frame set $\left \{  (\alpha,\beta) \in \R_{+}^2 : \alpha\beta < 1
\right \}$   were found, namely the one-sided exponential, the
exponential function $e^{-c|x|}$~\cite{Janssen1996}, the hyperbolic secant $(e^{cx}+
e^{-cx})^{-1}$~\cite{janssen2002hyperbolic}, and their Fourier transforms.  It turned out that all
these  examples were  totally positive functions. 

This observation led to an intensive investigation of Gabor frames
based on totally positive functions. In this context,  Schoenberg's
factorization theorem helped to identify several subclasses of totally
positive functions for which the frame set could be
characterized. Schoenberg's theorem \cite{Schoenberg1951} asserts that
$g \in L^1(\R)$ is totally positive if and only if its  Fourier transform $\hat g$ admits a factorization of the form
\begin{equation}\label{eq:schoenberg}
    \hat{g}(\xi)
=
c e^{-\gamma \xi^2} e^{2\pi i \nu \xi}
\prod_{j=1}^{N}
\frac{e^{-2\pi i \nu_j \xi}}{1 + 2\pi i \nu_j \xi},
\end{equation}
where $c>0$, $\nu,\nu_j \in \mathbb{R}$, $\gamma \geq 0$, $N \in \mathbb{N}_0\cup\{\infty\}$ and
$
0 < \gamma + \sum_j \nu_j^2 < \infty.
$

The frame set of a totally positive function $g$ is the maximal set
$\ft(g) = \mathcal{M}$, whenever the parameters of the factorization satisfy either 

(i) $\gamma = 0$ and the product \eqref{eq:schoenberg} is finite and
$N\geq 2$~\cite{GroechenigStoeckler2013}, or

(ii) $\gamma >0$ and the product is finite $N\in \N
_0$~\cite{grochenig2018sampling}.

We remark that the proofs for these two cases required completely
different methods.  For (i), the definition of total positivity and the
Schoenberg-Whitney conditions were used, and  for (ii) a zero-counting
argument from complex analysis was central. 

For arbitrary totally positive functions, a dense open subset of
$\mathcal{M}$
was shown to be contained in the frame set. Precisely,  if $g \in L^1(\R)$
is a continuous totally positive function and if $\alpha,\beta \in
\R_+$ satisfy $\alpha\beta < 1$ and $\alpha\beta \in \Q$, then
$\cG(g,\alpha,\beta)$ is a frame \cite{Groechenig2023}. The proof of
this result bypasses Schoenberg's factorization and works directly
with total positivity.

The case of irrational $\alpha \beta $ remained open. It is this case
that Theorem~\ref{thm:main} addresses and solves completely.  

\subsection{Proof technique}
The proof of Theorem~\ref{thm:main} follows the outline of
\cite{Groechenig2023}. It suffices to extract a subsequence $\{ k+
\delta _k: k\in \Z\}$ from every $x+\alpha \Z$ and show that every 
restricted pre-Gramian $G_\delta$ with entries
\begin{equation}
  \label{eq:c3}
(G_\delta)_{kl} =  g(k+\delta _k - l) \qquad k,l  \in \Z  
\end{equation}
is invertible on either $\ell ^1(\Z )$ or $\ell ^\infty (\Z )$. 
Using two substantial results about totally positive matrices and
functions, namely the 
de Boor-Friedland-Pinkus criterion for infinite totally positive
matrices~\cite{deBoorFriedlandPinkus1982} and the fact that the Zak
transform of a totally positive function has a  unique zero in its
fundamental domain~\cite{VinogradovUlitskaya2017}, one can show that
$G _\delta $ is surjective.

The crucial step is   to prove  that $G _\delta $ is also injective. 
If  $\alpha\beta$ is rational, then $G_\delta $ has some periodicity
property 
and can be identified with a finite-dimensional quadratic  matrix-function\cite{zibulskiZeevi1997analysis,zeeviZibulskiPorat1998multiwindow}.  In
finite dimensions, surjectivity implies injectivity. 

If $\alpha \beta $ is irrational, then the pre-Gramian has less
structure, and new ideas are required to prove injectivity. Dealing
with the kernel and cokernel of an operator suggests to use   Fredholm
theory and the techniques of limit-operators as appropriate
tools~\cite{lindner2006infinite,rabinovitch2004limit,seidel2015semi}.  

To explain the idea, let  $T_m$ denote the shift by $m\in \Z $ acting  on
$\ell^1(\mathbb Z)$. Then an operator $B$ is called a limit operator of the operator $A$ if there exists an unbounded sequence $(m_n)_{n \in \N} \subset \Z$ such that
$$
T_{-m_n} A T_{m_n} \to B, \quad |m_n|\to\infty,
$$
with convergence in a suitable topology, namely the $\mathcal
P$-topology. The collection of all limit operators of $A$ is called
the operator spectrum $\sigma_{\mathrm{op}}(A)$.
Moreover, $A$ is said to be \emph{self-similar}  if
$$
A\in\sigma_{\mathrm{op}}(A).
$$
The idea of the proof is based on the observation that
$\mathrm{ker}\,T_{-n}AT_{n} = T_{-n} \, \mathrm{ker}\, A$. If this kernel  
is finite-dimensional, then $T_{-n} \,  \mathrm{ker} \, A \to 0$ in the
$\mathcal{P}$-topology, so it is plausible that the limit operator is
injective. If $A$ is itself a limit operator, then $A$ is injective.

The technical details for these arguments can be extracted from the
work of  Seidel~\cite{seidel2015semi}
on semi-Fredholm band-dominated operators (limits of band
operators). In fact, one of the added benefits of our presentation is
a streamlined proof of Seidel's result for the Banach space $\ell
^1(\Z )$. We believe that the limit-operator techniques will be useful
for other problems in time-frequency analysis and we have therefore tried
to make the presentation as transparent as possible. 

\subsection{A Kadets-type theorem}
In an attempt  to extend the result about the invertibility of
the pre-Gramian, we were  led to a strong
perturbation for sampling in shift-invariant spaces and introduced
additional tools from Fredholm theory. Recall that 
for $g\in L^2(\mathbb R)$, the shift-invariant space $V^2(g) \subset
\lt$ is defined as 
$$
    V^2(g)
    =
    \Big\{ \sum_{l\in\Z} c_l g(\cdot-l) \in L^2(\R): c\in \ell^2(\Z)\Big\} .
$$
We always assume that $\|f\| \asymp \|c\|_2$ for all $f\in V^2(g)$,
which holds under a mild assumption on $g$ and holds, in particular,  for
totally positive functions in $L^1(\R )$~\cite{GroechenigStoeckler2013}.

A set $X \subset \R$ is called a sampling set if there exist
constants $A,B>0$ such that the sampling inequalities
$$
A\|f\|_2^2 \leq \sum _{x\in X} |f(x)|^2 \leq B\|f\|_2^2
$$
hold for all $ f\in V^2(g) $.  
The connection to the theory of Gabor frames is the following
characterization that is underlying many proofs.  Under a mild
condition on the decay and continuity of $g$, the set  $\mathcal{G}(g,\alpha,
1)$ \emph{is a frame for $L^2(\R )$ if and only if $x+\alpha\Z $ is a
  sampling set for $V^2(g)$ for all $x\in [0,\alpha ]$.} This
characterization is implicitly used in our work via
Proposition~\ref{prop:submatrix}. See Section~\ref{sec:52} for more details. 

In the context of shift-invariant spaces,  the pre-Gramian $G_\delta $ from \eqref{eq:c3} is
invertible if and only if the set $\{ j+\delta _j: j\in \Z \}$ is a
sampling set and the interpolation problem $f(j+\delta _j) = y_j$ can
be solved (uniquely)  for every $y\in \ell ^2(\Z )$. Such a sampling
set is called a \emph{complete interpolating set}. In ~\cite{Gro26} it was
suggested that Theorem~\ref{thm:main} could be proved by verifying a
sharp perturbation result for complete interpolating sets in $V^2(g)$
for totally positive generator. This is accomplished by the following
result.

\begin{theorem}\label{thm:kadets}
Let $g\in L^1(\mathbb R)$ be a continuous totally positive function, and let $(x_0,\frac12)\in[0,1)^2$ be the unique zero of its Zak
transform on the unit square. If $(\delta_k)_{k\in\mathbb Z} \subset \R$ is a sequence satisfying
$$
   \sup _{k\in \Z} |\delta_k -(x_0-\tfrac{1}{2})| <\frac12, 
$$
then $X = \{ k+\delta_k : k \in \Z \}$ is a complete interpolating
sequence for $V^2(g)$. Moreover, the constant $\frac12$ is sharp. 
\end{theorem}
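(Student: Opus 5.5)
The plan is to reduce the statement to invertibility of the restricted pre-Gramian $G_\delta$ with entries $g(k+\delta_k-l)$ on $\ell^2(\Z)$. Since $g$ decays exponentially, $G_\delta$ is band-dominated and bounded on every $\ell^p$. The set $X$ is a complete interpolating sequence exactly when $G_\delta$ is invertible on $\ell^2(\Z)$. For band-dominated operators with uniformly bounded entries, invertibility on one $\ell^p$ is equivalent to invertibility on all $\ell^p$ (Wiener-type / limit-operator results), so it suffices to work on $\ell^\infty(\Z)$ or $\ell^1(\Z)$. After the substitution $\delta_k=x_0-\tfrac12+\epsilon_k$ with $\sup|\epsilon_k|<\tfrac12$, each sampling point $k+\delta_k$ lies in a fixed compact subinterval of $(k+x_0-1,\,k+x_0)$. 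This is the window between consecutive points of the zero set of the Zak transform at frequency $\tfrac12$. We then proceed in three steps.

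\emph{Step 1 (surjectivity).} Using the de Boor–Friedland–Pinkus criterion for bi-infinite totally positive matrices, together with the fact that the Zak transform $Zg(x,\tfrac12)=\sum_l (-1)^l g(x-l)$ vanishes in $[0,1)$ only at $x_0$, we show that $G_\delta$ has a bounded right inverse on $\ell^\infty$. The mechanism is as follows. The alternating-sign vector $((-1)^l)$ solves the homogeneous equations only at the zeros. Confining $k+\delta_k$ strictly between consecutive zeros forces the diagonal minors to be positive, with a uniform lower bound on the relevant quantities, by compactness and the strict inequality. This follows the argument of \cite{Groechenig2023}.

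\emph{Step 2 (injectivity via limit operators).} Consider the class $\mathcal{D}$ of all sequences $\delta$ with values in the compact interval $[x_0-1+\eta,\,x_0-\eta]$, where $\eta>0$ is fixed. Every limit operator of $G_\delta$ has the form $G_{\delta'}$ with $\delta'\in\mathcal{D}$: one shifts $\delta$ and extracts a pointwise convergent subsequence. Step 1 holds uniformly on $\mathcal{D}$, so all limit operators are surjective with uniformly bounded right inverses. Seidel's semi-Fredholm theory then makes $G_\delta$ Fredholm on $\ell^1$, with index locally constant along homotopies inside $\mathcal{D}$. Deforming $\delta$ to the constant sequence $x_0-\tfrac12$ gives a Laurent operator whose symbol is a nonvanishing Zak transform, hence an invertible operator; so the index is $0$. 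Since the operator is surjective with index zero, it is injective, and therefore $G_\delta$ is invertible. If we instead want to use the self-similarity approach, we pick a limit operator of $G_\delta$ that is itself minimal in kernel dimension. Either way, the crux is that the index stays constant over the connected parameter set $\mathcal{D}$ (convex, hence path-connected, with $\delta\mapsto G_\delta$ norm-continuous in the sup norm).

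\emph{Step 3 (sharpness).} Take $\delta_k=x_0$ for all $k$, or values approaching it, so that $\sup|\delta_k-(x_0-\tfrac12)|=\tfrac12$. Then the sampling points are $k+x_0$, and the vector $c_l=(-1)^l$ satisfies $\sum_l c_l g(k+x_0-l)=(-1)^k Zg(x_0,\tfrac12)=0$. Hence $G_\delta$ has a nontrivial kernel on $\ell^\infty$, and a Laurent-operator argument shows it is not invertible on $\ell^2$. So no constant larger than $\tfrac12$ works.

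I expect Step 2 to be the main obstacle: transferring the uniform surjectivity to a Fredholm property and showing the index cannot jump. This requires the $\ell^1$ version of Seidel's results on semi-Fredholm band-dominated operators and careful control of uniformity over $\mathcal{D}$.
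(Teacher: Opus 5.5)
Your proof is essentially the paper's: surjectivity of every $G_{\delta(t)}$ along the linear homotopy $\delta(t)=(1-t)(x_0-\tfrac12)+t\delta$ inside the convex window $[x_0-1+\eta,\,x_0-\eta]$ (Proposition~\ref{prop:surjective}), then Fredholmness on $\ell^1(\Z)$. The paper proves that step directly in Theorem~\ref{thm:surjectivity_implies_Fredholm} and mentions Seidel's theorem, which you cite, as an alternative. The remaining steps also match: norm continuity of $t\mapsto G_{\delta(t)}$, index zero from the invertible Laurent operator with symbol $Zg(x_0-\tfrac12,-\xi)\neq0$, surjectivity plus index zero giving injectivity, spectral invariance to reach $\ell^2$, and sharpness via $\delta_k\equiv x_0$.

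Two small corrections. The limit-operator detour in Step~2 is superfluous: Fredholmness only needs surjectivity of each $G_{\delta(t)}$ itself on the space where you run the index argument, i.e.\ $\ell^1$, which Proposition~\ref{prop:surjective} provides, whereas your Step~1 only claims it on $\ell^\infty$. Drop the ``self-similarity'' alternative: a general non-recurrent $\delta$ need not give a self-similar $G_\delta$, so injectivity of its limit operators (Theorem~\ref{thm:selfsimilarinjective}) says nothing directly about $G_\delta$.
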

The  proof is based on properties of the Fredholm index and an old
result about uniform sampling in shift-invariant spaces by Janssen and
Walter~\cite{janssen94a,walter94}.

We note that Theorem~\ref{thm:kadets} implies
Theorem~\ref{thm:main}. 
Theorem~\ref{thm:kadets} is of interest in itself and  should be
compared to Kadets'  corresponding
statement for  the classical Paley-Wiener space $PW := V^2(g) = \{f\in
L^2(\R): \mathrm{supp} \, \hat{f} \subseteq [-1/2,1/2]\}$ where $g(t)
= \frac{\sin \pi t}{\pi t}$ is the cardinal sine function. In this
case, the integers $\Z $ form a complete interpolating set for $PW$ and
Kadets' classical $\frac14$-theorem~\cite{Kadec1964} asserts that if $\|\delta\|_\infty<\frac14$, then  $\{k+\delta _k: k\in \Z\} $ is
a complete interpolating sequence.
In this case,  the constant $\frac14$ is sharp.

Similar to the results in  \cite{BBG22}, there is a subtle and surprising  difference between the
Paley-Wiener space $PW$ (maximum perturbation  is $<1/4$) and
shift-invariant spaces based on totally positive functions (with
constant $1/2$). 

\subsection{Outline} In Section~2 we collect the key facts about the
Zak transform of totally positive functions and state the required
results from ~\cite{Groechenig2023}. Section~3 deals with the tools
from Fredholm theory and limit-operators that will prove the
injectivity of the pre-Gramian matrix. Section~4 contains the proof of
the main theorem (Theorem~\ref{thm:main}), and Section~5 discusses and
proves the Kadets-type theorem (Theorem~\ref{thm:kadets}) for
shift-invariant spaces.  

\subsection*{Usage of Large Language Models}

During the preparation of this manuscript, the authors used GPT-5.4 as an exploratory aid in surveying the literature on limit operators and identifying connections between limit-operator theory and the problems considered here. Following a connection suggested by the model, the authors consulted Seidel's work \cite{seidel2015semi} and adapted and streamlined the proof of Theorem \ref{thm:surjectivity_implies_Fredholm}. GPT-5.4 was also used to suggest simplifications of technical parts of the arguments, including a simpler choice of the perturbation sequence $(\delta_k)_k$ in Lemma \ref{lem:good_delta_exists}, which had initially been formulated in a more complicated way. Codex 5.5 and Claude Opus 4.7 were used to assist with the Lean formalization. All mathematical arguments, formalizations and technical details were independently checked and written by the authors.

\section{Totally positive functions and Gabor frames}

This section collects the key ingredients and results about the Zak
transform of totally positive functions, a criterium for Gabor frames
via a restricted pre-Gramian matrix, and its surjectivity. These
provide the analytic preliminaries for the deeper analysis of the
pre-Gramian.

\subsection{Zak transform}

For $g \in L^2(\R)$, we define the Zak transform $Zg : [0,1)^2 \to \C$ by
$$
Zg(x,\xi)=\sum_{k\in \Z} g(x-k)e^{2\pi i k\xi}.
$$
The Zak transform of a continuous totally positive function is continuous \cite[Thm.~8.2.1.~(c)]{Groechenig2001}.
The following important theorem provides a complete description of the
zero set of the Zak transform of a  continuous totally positive
function~\cite[Thm.~1]{VinogradovUlitskaya2017} and \cite{KS14,Klo15}. 

\begin{theorem}\label{prop:zak-zero}
Let $g \in L^1(\R)$ be a continuous totally positive function. Then the Zak transform $Zg$ has a unique zero in $[0,1)^2$, located at $(x_0,1/2) \in [0,1)^2$ for some
$0 \leq x_0<1$.
\end{theorem}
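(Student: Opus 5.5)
We have to prove two things: that $Zg$ has a zero on the line $\xi=1/2$, and that there are no other zeros in $[0,1)^2$.

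\emph{Step 1: reduction by symmetry.} Since $g$ is real-valued, $Zg(x,-\xi)=\overline{Zg(x,\xi)}$. Moreover $Zg(x,\xi+1)=Zg(x,\xi)$. At $\xi=0$ we have $Zg(x,0)=\sum_k g(x-k)>0$, because a totally positive function is nonnegative and not identically zero. At $\xi=1/2$ the Zak transform is real:
$$Zg(x,\tfrac12)=\sum_k (-1)^k g(x-k).$$
It also satisfies the quasi-periodicity $Zg(x+1,\tfrac12)=-Zg(x,\tfrac12)$. A continuous function that changes sign over a period has a zero, so there is some $x_0\in[0,1)$ with $Zg(x_0,\tfrac12)=0$. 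This gives existence.

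\emph{Step 2: no zeros off the line $\xi=1/2$.} For fixed $\xi\notin\{0,\tfrac12\}$ we must show $Zg(x,\xi)\ne0$ for all $x$. The plan is to use total positivity through the variation-diminishing property. Write $Zg(x,\xi)=e^{2\pi i x\xi}\,\widetilde Z(x)$, where $\widetilde Z(x)=\sum_k g(x-k)e^{-2\pi i(x-k)\xi}$ is $1$-periodic up to a phase. A zero of $Zg(\cdot,\xi)$ means that $\operatorname{Re}$ and $\operatorname{Im}$ of $\sum_k g(x-k)e^{2\pi ik\xi}$ vanish simultaneously. These are $\sum_k g(x-k)\cos(2\pi k\xi)$ and $\sum_k g(x-k)\sin(2\pi k\xi)$.

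Consider the combination $c_\theta(k)=\cos(2\pi k\xi-\theta)$. For each $\theta$, the function $F_\theta(x)=\sum_k g(x-k)c_\theta(k)$ has at most as many sign changes per period as the sequence $c_\theta$, by the variation-diminishing property of the totally positive kernel $g(x-k)$ (Schoenberg/Karlin). Per unit of $k$, that sequence has sign changes at rate $2\xi$ for $\xi<\tfrac12$.

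Suppose $Zg(x^*,\xi)=0$. Then $F_\theta(x^*)=0$ for every $\theta$. Choose $\theta$ so that $F_\theta$ also vanishes with a double zero, or compare zero counts of $F_\theta$ on long intervals $[-N,N]$. We compute the count via the argument principle: the winding of $x\mapsto Zg(x,\xi)$ over $x\in[0,1]$ is forced by the quasi-periodicity $Zg(x+1,\xi)=e^{2\pi i\xi}Zg(x,\xi)$. This gives a lower bound on the number of zeros of $F_\theta$ that exceeds the upper bound coming from variation diminishing, once an extra zero is present. The counting on $[-N,N]$ as $N\to\infty$ absorbs boundary effects, and exponential decay of $g$ justifies the tails.

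\emph{Step 3: uniqueness on $\xi=1/2$.} The same variation-diminishing bound applies to $\sum_k(-1)^kg(x-k)$: the sign sequence alternates, so there is at most one sign change per unit interval. A tangential double zero is excluded by the strict inequality in the counting argument, or by perturbing $\xi$ slightly. Hence $x_0$ is unique.

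The main obstacle is Step 2. The zero-counting must be made rigorous for infinite kernels with only nonstrict total positivity, where determinants may vanish. I would first try to reduce to strictly totally positive $g$ by convolving with a narrow Gaussian $g_\varepsilon=g*\varphi_\varepsilon$. The convolution preserves total positivity and makes it strict and smooth. I would then pass to the limit, using Hurwitz-type stability of isolated zeros of the continuous map $(x,\xi)\mapsto Zg$. Since $Zg_\varepsilon\to Zg$ uniformly, a zero of $Zg$ off $\xi=\tfrac12$ with nonzero local degree would produce nearby zeros for $Zg_\varepsilon$, which is a contradiction. Degenerate zeros of degree zero need a separate perturbation argument, and this is where I expect the real difficulty to lie.
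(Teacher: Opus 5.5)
\textbf{There is a genuine gap: your proposal proves existence of the zero but not its uniqueness.} The paper does not prove this statement; it quotes it from Vinogradov--Ulitskaya (and Kloos--St\"ockler, Kloos), so your argument has to stand on its own. Step~1 is correct: $\psi(x)=Zg(x,\tfrac12)=\sum_k(-1)^kg(x-k)$ is real, continuous and satisfies $\psi(x+1)=-\psi(x)$, hence vanishes in $[0,1)$. (Minor point: $Zg(x,0)>0$ for all $x$ needs $\{g>0\}$ to meet every coset $x+\mathbb Z$. This holds because an integrable totally positive function is positive on a half-line or on all of $\mathbb R$; ``not identically zero'' alone does not give it.)

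The failure is in uniqueness. The variation-diminishing inequality is global, so it must first be localized, since both sides are infinite here. More importantly, it bounds \emph{sign changes}, not zeros. Your winding count does detect a zero at which the curve $x\mapsto Zg(x,\xi)$, $0<\xi<\tfrac12$, passes \emph{through} the origin. There the argument jumps by $\pi$, so its continuous part changes by at least $\pi-2\pi\xi$ per period. A suitable $F_\theta$ then has on average at least $(1-2\xi)+1=2-2\xi>2\xi$ sign changes per period. But suppose the curve only \emph{touches} the origin, say $Zg(x,\xi)\approx(x-x^*)^2v$. Then no $F_\theta$ changes sign at $x^*$, the argument is continuous through $x^*$, and the winding lower bound is again $2\xi$ per period. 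That is exactly the variation-diminishing upper bound, so there is no contradiction. Step~3 has the same blind spot: sign counting plus antiperiodicity shows that $\psi$ has exactly one sign change per period, not that it has no further touching zeros.

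Your proposed remedy does not close this. Passing from $g_\varepsilon=g*\varphi_\varepsilon$ to $g$ via the local degree of $(x,\xi)\mapsto Zg(x,\xi)$ transfers only zeros of nonzero index. A zero of index zero can disappear under an arbitrarily small perturbation, so knowing the zeros of $Zg_\varepsilon$ says nothing about it. Since $Zg$ is not holomorphic in $(x,\xi)$ jointly, Hurwitz's theorem does not apply to this real two-variable map. ``Perturbing $\xi$'' in Step~3 fails for the same index-zero touching zeros. You flag this yourself, but it is the heart of the statement, not a technicality.

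For the part ``no zeros off the line'' there is a route with no counting at all, using holomorphy in the frequency variable only:
\begin{enumerate}[(i)]
\item For fixed $x$, the two-sided sequence $a_m=g(x+m)$ is a P\'olya frequency sequence, since its Toeplitz minors are $\det\bigl(g((x+i_r)-j_s)\bigr)\ge0$.
\item By the exponential decay of $g$, its generating function $\Phi_x(z)=\sum_m g(x+m)z^m$ converges on an annulus containing $|z|=1$.
\item Edrei's representation theorem for such sequences places all zeros of $\Phi_x$ in that annulus on the negative real axis.
\item Since $Zg(x,\xi)=\Phi_x(e^{-2\pi i\xi})$, every zero of $Zg$ has $\xi\in\tfrac12+\mathbb Z$.
\end{enumerate}
Uniqueness of $x_0$ on that line still needs a multiplicity-sensitive argument for $\psi$, which is what the cited works supply.
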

Since the Zak transform is quasi-periodic and satisfies $Zg(x+k, \xi )
= e^{2\pi i k \xi } Zg(x,\xi )$, Theorem~\ref{prop:zak-zero} asserts
that \emph{all} zeros of the Zak transform of a continuous totally
positive function are of the form $(x_0 + k, 1/2), k\in \Z $. 
\subsection{Submatrix criterion}
We next recall a submatrix criterion that reduces the frame property of a Gabor system  to the invertibility of suitable pre-Gramians.
Given $g\in\lt$ and a sequence $\delta = (\delta_k)_{k\in\Z}$, we define the \emph{pre-Gramian} $\preG[\delta]$ by
\begin{equation}
        \preG[\delta] = (g(k+\delta_k-l))_{k,l\in\Z}.
    \end{equation}
We say that a function $g : \R \to \C$ has polynomial decay of order $\sigma>1$ if there exist $C > 0$ such that 
\begin{equation}\label{eq:polynomial_decay}
    |g(t)| \leq C (1+|t|)^{-\sigma}, \quad t \in \R.
\end{equation}

The following proposition shows that the Gabor frame property follows once one can select, from every lattice translate $x+\alpha\Z$, a suitably perturbed copy of $\Z$ whose associated pre-Gramian is boundedly invertible
\cite[Prop.~2.4]{Groechenig2023}. This yields a link from a discrete matrix problem back to the original Gabor system.

\begin{proposition}\label{prop:submatrix}
Let $g : \R \to \C$ be a function with polynomial decay $\sigma>1$. Further, let $0<\alpha<1$. Assume that for every
$x\in \R$, the set $x+\alpha\mathbb Z$ contains a subset
$$
\{k+\delta_k(x):k\in\mathbb Z\}
$$
with a perturbation sequence $\delta(x) = ( \delta_k(x))_{k \in \Z}$, such that the pre-Gramian $\preG[\delta][x]$ defines a bounded invertible linear operator on $\ell^\infty(\mathbb Z)$. Then
$\cG(g,\alpha,1)$ is a Gabor frame.
\end{proposition}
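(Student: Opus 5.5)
We prove Proposition~\ref{prop:submatrix} through the sampling characterization of Gabor frames: the sampling inequalities for the sets $x+\alpha\Z$ in the shift-invariant space $V^2(g)$ should be inherited from a left inverse of the pre-Gramian. The plan has three steps: (i) reduce the frame property to uniform lower sampling bounds for $x+\alpha\Z$, (ii) derive those bounds from the assumed invertibility, and (iii) make the constants uniform in $x$.

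\emph{Step (i): reduction to sampling.} Write the frame operator using the Zak transform or, equivalently, the Ron--Shen/Janssen fiberization in the time domain. Since $\beta=1$, the analysis operator splits over $x\in[0,\alpha)$ by periodization in the modulation variable. One obtains that $\cG(g,\alpha,1)$ is a frame with bounds $A,B$ if and only if
\[
A\|c\|_2^2 \le \sum_{j\in\Z} \Big|\sum_{l\in\Z} c_l\, g(x+\alpha j - l)\Big|^2 \le B\|c\|_2^2
\]
for all $c\in\ell^2(\Z)$ and almost every $x$. This is the pre-Gramian $P(x)=(g(x+\alpha j-l))_{j,l}$ acting $\ell^2\to\ell^2$. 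Polynomial decay with $\sigma>1$ makes $P(x)$ a matrix with off-diagonal decay, uniformly in $x$. By Schur's test it is bounded on every $\ell^p$ with a uniform constant, which gives the upper bound.

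\emph{Step (ii): lower bound for fixed $x$.} By hypothesis $\{k+\delta_k(x)\}$ is a subset of $x+\alpha\Z$, so $G_{\delta(x)}$ is a row-submatrix of $P(x)$ and
\[
\|P(x)c\|_2 \ge \|G_{\delta(x)}c\|_2 .
\]
It therefore suffices to have $\|G_{\delta(x)}c\|_2\ge C_x^{-1}\|c\|_2$. The hypothesis gives invertibility only on $\ell^\infty$. Because $G_\delta$ has polynomial off-diagonal decay, its transpose is bounded on $\ell^1$, and the adjoint of an invertible operator on $\ell^\infty$ restricted to $\ell^1$ is subtle. The better route is a Wiener/Sjöstrand-type inverse-closedness result for matrices with polynomial off-diagonal decay (Jaffard/Baskakov/Sjöstrand, or Aldroubi--Baskakov--Krishtal on $\ell^p$-stability): for such matrices, being bounded below on one $\ell^p$, $1\le p\le\infty$, implies being bounded below on all $\ell^p$. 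Applied to $G_{\delta(x)}$, this gives a bound below on $\ell^2$, hence injectivity and closed range, and therefore the lower sampling bound for $x+\alpha\Z$.

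\emph{Step (iii): uniformity in $x$ — the main obstacle.} The constant $C_x$ must be bounded uniformly in $x$; otherwise one gets only completeness, not a frame. I would argue by compactness in the weak-$*$/limit sense. Suppose $x_n$ is a sequence with $C_{x_n}\to\infty$. Using quasi-periodicity $x\mapsto x+\alpha$ and integer shifts (conjugation by translations), reduce to $x_n\in[0,1]$ and normalized near-kernel vectors $c^{(n)}$ centered near the origin. The decay keeps the mass from escaping. Pass to a subsequence with $x_n\to x$, and use continuity of $g$ together with the uniform decay to get a limiting nonzero $c\in\ell^\infty$ with $P(x')c=0$ for some limit point $x'$ (possibly a translate). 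This contradicts the $\ell^\infty$-injectivity of $G_{\delta(x')}$, which follows from its invertibility. This is the standard ``Beurling weak limit'' argument, and the care needed lies in ensuring that the vectors can be recentered so the limit is nonzero; the $\ell^\infty$ setting is exactly what makes that work. Combining the uniform lower bound with the upper bound from Step (i) yields the frame inequalities.
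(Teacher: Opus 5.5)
Your Steps (i) and (ii) are fine. For $\beta=1$ the Ron--Shen identity reduces the frame property to $\inf_x A_2(x)>0$, where $P(x)=(g(x+\alpha j-l))_{j,l\in\Z}$ and $A_2(x)=\inf_{\|c\|_2=1}\|P(x)c\|_2$. Since $G_{\delta(x)}$ is a row-submatrix of $P(x)$, Theorem~\ref{thm:spec_inv} gives $A_2(x)\ge \|G_{\delta(x)}^{-1}\|_{2\to2}^{-1}>0$ for each $x$; this needs $\delta(x)$ bounded, as in all applications.

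The gap is in Step (iii), which does not close as written. There are two problems.
\begin{enumerate}[(a)]
\item You start from $C_{x_n}\to\infty$, i.e.\ blow-up of $\|G_{\delta(x_n)}^{-1}\|$. The hypotheses permit this, since $\delta(x)$ is chosen arbitrarily for each $x$. It also gives no near-kernel vectors of $P(x_n)$, whose remaining rows may be large. What must degenerate is $A_2(x_n)$.
\item More seriously, recentering only yields a nonzero limit for $\ell^\infty$-normalized vectors. An $\ell^2$-normalized sequence can spread out, e.g.\ $n^{-1/2}\chi_{\{0,\dots,n-1\}}$, so that every recentered coordinatewise limit vanishes. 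Run correctly in $\ell^\infty$ (shift the column index to centre a large entry, then reduce $x$ modulo $\alpha$ by a row shift), your argument proves $\inf_x\inf_{\|c\|_\infty=1}\|P(x)c\|_\infty>0$. That is a uniform lower bound on $\ell^\infty$, whereas Step (i) needs one on $\ell^2$.
\end{enumerate}
Step (ii) passes from $\ell^\infty$ to $\ell^2$ only for each fixed $x$, with no control of the constant. So your final ``combination'' of bounds is unjustified. To finish this way you would need a quantitative stability transfer in which the $\ell^2$ constant depends only on the $\ell^\infty$ constant and the decay constants.

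The simplest repair avoids weak limits altogether. Assume $g$ is continuous. You use this anyway; the statement as printed omits it, but Lemma~\ref{thm:Gabor_vs_Sampling} and Theorem~\ref{thm:semimain} assume it. Then:
\begin{enumerate}[(1)]
\item $g$ is uniformly continuous, and Schur's test with row and column sums, as in Lemma~\ref{lem:uniformcontinuity}(iii), shows that $x\mapsto P(x)$ is norm-continuous into $\mathcal L(\ell^2(\Z))$.
\item Since $|A_2(x)-A_2(y)|\le\|P(x)-P(y)\|_{2\to2}$, the function $A_2$ is continuous.
\item $A_2$ is $1$-periodic, because $P(x+1)$ is $P(x)$ composed with an isometric shift of the column index.
\item $A_2$ is pointwise positive by Step (ii).
\end{enumerate}
Hence $A_2$ has a positive minimum on $[0,1]$, which is the uniform lower frame bound. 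With this repair your proof is a genuinely different and fairly elementary route. It uses Ron--Shen, spectral invariance (hence the full invertibility hypothesis) and continuity in the fibre variable $x$.

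The paper does not prove the proposition itself; it takes it from \cite[Prop.~2.4]{Groechenig2023}. As Section~\ref{sec:52} explains, it then follows from Lemma~\ref{thm:Gabor_vs_Sampling}. Invertibility of $G_{\delta(x)}$ makes $\{k+\delta_k(x)\}$, hence $x+\alpha\Z$, a sampling set for every $x$. That characterization needs no uniformity in $x$ and no passage from $\ell^\infty$ to $\ell^2$; both are absorbed in the machinery of \cite{GroechenigEtAl2017,GroechenigEtAl2015}.
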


To apply Proposition \ref{prop:submatrix}, one must establish the invertibility of the relevant pre-Gramians. The first step comes from total positivity: if the perturbations are chosen within the interval determined by the unique zero of the Zak transform, then the associated pre-Gramian is surjective on $\ell^1(\Z)$. The following proposition makes this precise \cite[Prop.~3.1]{Groechenig2023}.

\begin{proposition}\label{prop:surjective}
Let $g \in L^1(\R)$ be a continuous totally positive function. Let
$(x_0,\frac12)\in[0,1)^2$ be the unique zero of the Zak transform
$Zg$. Let $0<\varepsilon<\frac12$, and let $\delta = (\delta_k) _{k
  \in \Z}$ be an arbitrary sequence of real numbers satisfying
$$
\delta_k\in [x_0-1+\varepsilon,\ x_0-\varepsilon],
\quad k\in \Z.
$$
Then the pre-Gramian $\preG[\delta]$ defines a surjective linear
operator on $\ell^1(\Z)$ and on $\ell ^\infty (\Z )$.
\end{proposition}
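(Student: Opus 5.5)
This statement is quoted from \cite[Prop.~3.1]{Groechenig2023}, so the natural route is to reproduce its argument. The plan has two layers. First, prove surjectivity for finite sections and bi-infinite band approximations using total positivity. Second, pass to the full operator using exponential decay and a uniform lower bound obtained from the Zak transform. By duality it suffices to work with one space. The adjoint (transpose) of $\preG[\delta]$ acting on $\ell^\infty$ is the transpose acting on $\ell^1$. Surjectivity on $\ell^\infty$ is therefore equivalent to the transpose being injective with closed range on $\ell^1$, i.e.\ bounded below there. Symmetrically, surjectivity on $\ell^1$ corresponds to the transpose being bounded below on $\ell^\infty$ in the weak-$*$ sense. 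The core estimate I aim for is therefore a uniform lower bound of the form $\|\preG[\delta]^{T} c\|_p \ge C\|c\|_p$ for $p=1,\infty$, with $C$ depending only on $\varepsilon$ and $g$, not on $\delta$.

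\textbf{Step 1 (structure of the matrix).} Because $g$ is totally positive, $\preG[\delta]$ is a totally positive matrix whenever the sample points $k+\delta_k$ are increasing. This holds since $\delta_k$ ranges over an interval of length $1-2\varepsilon<1$. By Schoenberg's result $g$ decays exponentially, so the matrix is band-dominated with uniformly exponentially decaying off-diagonals. The constraint $\delta_k\in[x_0-1+\varepsilon,x_0-\varepsilon]$ keeps each point $k+\delta_k$ inside the open interval $(k+x_0-1,\,k+x_0)$ between consecutive zeros of the Zak transform in $x$ (at $\xi=1/2$). This is the interlacing that I expect to drive the Schoenberg--Whitney-type positivity of the relevant minors.

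\textbf{Step 2 (nonvanishing of minors).} Apply the de Boor--Friedland--Pinkus criterion \cite{deBoorFriedlandPinkus1982} for bi-infinite totally positive matrices. It asserts that such a matrix is boundedly invertible/surjective on $\ell^\infty$ provided its diagonal (contiguous) minors are bounded below in a suitable normalized way. In place of diagonal minors it suffices to control the $1$-periodic comparison symbol. For constant $\delta_k\equiv d$ the matrix is a Laurent matrix with symbol $\xi\mapsto \sum_m g(d+m)e^{2\pi i m\xi}=Zg(d,\xi)$ (up to sign conventions). By Theorem~\ref{prop:zak-zero} this symbol is nonvanishing for $d\notin x_0+\Z$. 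Moreover its winding number is constant for $d\in(x_0-1,x_0)$, and a shift of the index normalizes it to zero. Compactness of $[x_0-1+\varepsilon,x_0-\varepsilon]$ gives $\min|Zg(d,\xi)|\ge\eta(\varepsilon)>0$. For variable $\delta$ I would use variation-diminishing properties of totally positive kernels to show the following: a nonzero $c$ with $\preG[\delta]^T c$ small would force too many sign changes relative to the interlacing, which yields a contradiction. This step gives the uniform lower bound.

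\textbf{Step 3 (passage to surjectivity).} Transfer the lower bound for the transpose to surjectivity of $\preG[\delta]$ on both $\ell^1$ and $\ell^\infty$ by the closed-range theorem. Compatibility between the two spaces comes from the exponential off-diagonal decay, via a Jaffard/Wiener-type argument.

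The main obstacle is Step 2 for non-constant $\delta$. The Zak-transform argument only handles constant perturbations directly, and the uniform bound must survive arbitrary variation of $\delta_k$ within the window. I would first try to combine the de Boor--Friedland--Pinkus minor condition with the strict interlacing from Step 1.
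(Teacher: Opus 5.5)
\textbf{Gap in Step~2.} Step~2 carries all of the content of the proposition, and it is not carried out. The variation-diminishing sketch for non-constant $\delta$ is a hope rather than an argument. Your constant-$\delta$ analysis also cannot be extended: it uses the whole function $\xi\mapsto Zg(d,\xi)$ as a Laurent symbol, and no symbol exists once $\delta_k$ depends on $k$. (The winding number is a red herring: for bi-infinite Laurent operators, non-vanishing of the symbol already gives invertibility.) The underlying problem is that you state the de~Boor--Friedland--Pinkus criterion in the wrong form. This proposition is cited from \cite[Prop.~3.1]{Groechenig2023}, and what drives it is an alternating-sign criterion, not a condition on normalized contiguous minors. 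The criterion says: if $A$ is a bi-infinite totally positive matrix bounded on $\ell^\infty(\Z)$, and there is $y\in\ell^\infty(\Z)$ with $(-1)^k(Ay)_k\ge 1$ for all $k$, then $A$ has a bounded right inverse on $\ell^\infty(\Z)$. In particular, $A$ maps $\ell^\infty(\Z)$ onto itself.

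\textbf{The missing idea.} Test $G_\delta$ on the single sequence $c_l=(-1)^l$. Substituting $l=k+j$ gives
\[
(G_\delta c)_k=\sum_{l\in\Z} g(k+\delta_k-l)(-1)^l=(-1)^k\sum_{j\in\Z} g(\delta_k-j)(-1)^j=(-1)^k\, Zg(\delta_k,\tfrac12).
\]
This is computed row by row, so arbitrary variation of $\delta_k$ costs nothing. Since $g$ is real, $x\mapsto Zg(x,\frac12)$ is real and continuous. By Theorem~\ref{prop:zak-zero} and quasi-periodicity, it vanishes exactly on $x_0+\Z$. Hence it has a constant sign $s\in\{-1,1\}$ on $(x_0-1,x_0)$, and by compactness $|Zg(x,\frac12)|\ge\eta>0$ on $[x_0-1+\varepsilon,x_0-\varepsilon]$. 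So $y=s\,c/\eta$ satisfies the alternating-sign hypothesis for the totally positive matrix $G_\delta$ from your Step~1, and surjectivity on $\ell^\infty(\Z)$ follows. The hypothesis holds uniformly in $\delta$, since $\eta$ depends only on $\varepsilon$ and $g$. This is the real role of your interlacing observation $k+\delta_k\in(k+x_0-1,k+x_0)$.

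\textbf{Step~3.} For the passage to $\ell^1(\Z)$, Jaffard/Wiener-type inverse-closedness concerns two-sided inverses. What you need instead is the invariance of $\ell^p$-stability (bounded below) for matrices with off-diagonal decay \cite{ShinSun2009,Tessera2010}, applied to the transpose. The chain is: $G_\delta$ is onto $\ell^\infty$ iff $G_\delta^T$ is bounded below on $\ell^1$; this implies $G_\delta^T$ is bounded below on $\ell^\infty$; and that holds iff $G_\delta$ is onto $\ell^1$.
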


This concludes the analytic input from total positivity. For the proof
of Theorem~\ref{thm:main} it remains to show that the structured
pre-Gramians arising in our setting are not only surjective but in
fact invertible. 

\section{Operator theory tools}

The operator-theoretic argument that will give injectivity of the pre-Gramians considered in Proposition \ref{prop:surjective} relies on three ingredients. First, a spectral invariance theorem allows invertibility to be transferred between the spaces $\ell^p(\Z)$. Second, approximation by band operators shows that every surjective matrix with polynomial off-diagonal decay is Fredholm. Finally, limit-operator theory implies that a self-similar Fredholm operator is injective. Together, these results strengthen the surjectivity statement obtained in the previous section to the invertibility required by Proposition~\ref{prop:submatrix}.

In the present section, we denote by $\mathcal{L}({\ell^1(\Z)})$ the
Banach space of all bounded linear operators on ${\ell^1(\Z)}$, and
endow it with the operator norm $\| A \|_{1 \to 1}$, which can be
computed by Schur's test to be 
\begin{equation}\label{eq:op_norm_bound}
    \| A \|_{1 \to 1} 
    =\sup_{l\in\mathbb Z}\sum_{k\in\mathbb Z}|A_{kl}|.
\end{equation}

\subsection{Spectral invariance}

Let $A = (A_{kl})_{k,l \in \Z}$ be a bi-infinite matrix with entries $A_{kl} \in \C$. We say that $A$ has polynomial off-diagonal decay of order $\sigma>1$ if there exist $C>0$ such that
$$
|A_{kl}|\le C(1+|k-l|)^{-\sigma},
\quad k,l\in\mathbb Z.
$$
Every $A = (A_{kl})_{k,l \in \Z}$ with polynomial off-diagonal decay defines a bounded linear operator on $ \ell^p(\mathbb Z)$ for every $1 \leq p \leq \infty$ via the formula
$$
(Ac)_k = \sum_{l \in \Z} A_{kl}c_l.
$$
We make use of the following  result on spectral invariance
\cite{ShinSun2009,Tessera2010}, which complements the classical result of
Jaffard. 

\begin{theorem}\label{thm:spec_inv}
     Let $A=(A_{k,l})_{k,l\in\Z}$ have polynomial off-diagonal decay.
    If $A$ is invertible on $\ell^{p_0}(\Z)$ for some $1\leq p_0\leq \infty$, then it is invertible on all $\ell^p(\Z)$, $1\leq p\leq \infty$.
\end{theorem}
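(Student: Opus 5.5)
The plan is to show that the inverse matrix $A^{-1}$ again has polynomial off-diagonal decay, possibly of the same order $\sigma$. Once this is known, $A^{-1}$ defines a bounded operator on every $\ell^p(\Z)$. Moreover, the identities $AA^{-1}=A^{-1}A=I$ hold entrywise as matrix products, which are absolutely convergent because of the decay. Hence $A$ is invertible on every $\ell^p(\Z)$, $1\le p\le\infty$.

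\emph{Step 1 (Banach algebra).} Let $\mathcal A_\sigma$ be the set of matrices with $\|A\|_{\mathcal A_\sigma}=\sup_{k,l}|A_{kl}|(1+|k-l|)^\sigma<\infty$. Using $\sigma>1$ and the convolution estimate $\sum_m (1+|k-m|)^{-\sigma}(1+|m-l|)^{-\sigma}\le C_\sigma (1+|k-l|)^{-\sigma}$, we show that $\mathcal A_\sigma$ is a Banach algebra (after renormalizing). It embeds continuously into $\mathcal L(\ell^p)$ for all $p$, by Schur's test as in \eqref{eq:op_norm_bound}.

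\emph{Step 2 (reduction to $\ell^2$).} Recall Jaffard's theorem: if $A\in\mathcal A_\sigma$ is invertible on $\ell^2$, then $A^{-1}\in\mathcal A_\sigma$. So it suffices to pass from invertibility on $\ell^{p_0}$ to invertibility on $\ell^2$.

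\begin{enumerate}[(a)]
\item By duality, invertibility of $A$ on $\ell^{p_0}$ gives invertibility of the transpose $A^T\in\mathcal A_\sigma$ on $\ell^{p_0'}$. For $p_0=\infty$ we use that $A$ on $\ell^\infty$ is the Banach adjoint of $A^T$ on $\ell^1$, and that an operator with an invertible adjoint is itself invertible. The same argument handles $p_0=1$ with the roles reversed.
\item Hence both $A$ and $A^T$ are bounded below on some $\ell^{p}$-space.
\item If we can show that being bounded below transfers from one $p$ to every $q\in[1,\infty]$ (Step 3), then $A$ and $A^*=\overline{A^T}$ are both bounded below on $\ell^2$. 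Therefore $A$ is invertible on $\ell^2$, and Step 2 closes.
\end{enumerate}

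\emph{Step 3 (main obstacle: $p$-independence of the lower bound).} This is the core of the Shin--Sun/Tessera argument, and I expect it to be the hard part. Suppose $\|Ac\|_p\ge \kappa\|c\|_p$ for all $c$. We want $\|Ac\|_q\ge \kappa'\|c\|_q$.

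\begin{enumerate}[(a)]
\item I would localize with a smooth partition of unity $\psi_{n,N}(k)=\psi((k-n)/N)$ at scale $N$. For vectors supported in a window of size $\sim N$, the $\ell^p$- and $\ell^q$-norms are comparable up to a factor $N^{|1/p-1/q|}$.
\item The commutator $[A,\Psi_{n,N}]$ has $\ell^p$-operator norm $O(N^{-\min(1,\sigma-1)})$, which follows from the off-diagonal decay and the Lipschitz bound on $\psi$.
\item Applying the lower bound to $\psi_{n,N}c$ and absorbing commutator terms gives local estimates. A first attempt might read
\[
\|\psi_{n,N}c\|_q\lesssim N^{|1/p-1/q|}\bigl(\|\chi_{n,2N}Ac\|_q+N^{-\theta}\|\chi_{n,2N}c\|_q\bigr).
\]
The difficulty is that the polynomial loss $N^{|1/p-1/q|}$ competes with the gain $N^{-\theta}$.
\item To beat this loss I would use Tessera's bootstrapping idea: iterate over dyadic scales, or equivalently first derive weighted (Sobolev-type) $\ell^p$ lower bounds and then use that the sum over $n$ of the local $\ell^q$-norms is equivalent to $\|c\|_q$. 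If the direct estimate fails for large $|1/p-1/q|$, I would move $q$ toward $p$ in small increments, where the loss exponent is smaller than $\theta$, and iterate finitely many times.
\end{enumerate}

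With Step 3 in hand, Steps 2 and 1 complete the proof as described.
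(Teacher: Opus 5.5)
\textbf{Overall.} Your reduction is sound. The paper does not prove Theorem~\ref{thm:spec_inv}; it only cites Shin--Sun and Tessera \cite{ShinSun2009,Tessera2010}, whose core is exactly your Step~3. So Steps~1--2 (the duality bookkeeping there is correct) plus a citation for Step~3 would be on par with the paper.

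Jaffard's theorem is in fact superfluous once Step~3 holds for both $A$ and $A^T$. For $q<\infty$, $A$ has closed range on $\ell^q$ and its Banach adjoint $A^T$ on $\ell^{q'}$ is injective, so $A$ is onto. For $q=\infty$, $A$ is the adjoint of $A^T$ on $\ell^1$, which is invertible by the case $q=1$.

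\textbf{The gap in Step~3.} As a self-contained argument, Step~3 is only a menu of strategies, and the one concrete estimate you write down is false. The commutator term is not controlled by $\chi_{n,2N}c$: its far part $-\psi_{n,N}A(1-\chi_{n,3N})c$ sees all of $c$ through the polynomial tails of $A$.

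For a counterexample, take $A_{kl}=\delta_{kl}+\epsilon(1+|k-l|)^{-\sigma}$ with small $\epsilon$, which is invertible on every $\ell^p$. You can add to $c=e_n$ a finitely supported vector living to the right of $n+2N$ so that $\chi_{n,2N}Ac=0$; this is possible because the Cauchy-type block $\bigl((1+l-k)^{-\sigma}\bigr)$ is nonsingular. Then your left-hand side equals $1$, while the right-hand side equals $N^{\gamma-\theta}$ with $\gamma=|1/p-1/q|$. This already fails for $p=q$. So the real obstacle is nonlocality, not the competition between $N^{\gamma}$ and $N^{-\theta}$.

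\textbf{The repair.} Absorb only after summing over $n\in N\Z$; hat functions $\psi_{n,N}$ form a partition of unity and work well here.
\begin{enumerate}[(a)]
\item The near part of the commutator has operator norm $\lesssim N^{-\theta}\log N$ (the logarithm appears only when $\sigma=2$).
\item The far part is bounded pointwise on the $n$-th window by a multiple of $F_n=\sum_{|l-n|>3N}|l-n|^{-\sigma}|c_l|$.
\item Grouping $l$ into blocks of length $N$, then H\"older on each block and Young's inequality on $N\Z$, gives $\bigl(\sum_n F_n^q\bigr)^{1/q}\lesssim N^{1-1/q-\sigma}\norm{c}_q$.
\end{enumerate}
After the $\ell^p\leftrightarrow\ell^q$ conversions on windows of length $\sim N$, and with $b=\max(0,1/p-1/q)$, this yields
\[
\norm{c}_q\le CN^{\gamma}\norm{Ac}_q+C\bigl(N^{\gamma-\theta}\log N+N^{b+1-\sigma}\bigr)\norm{c}_q .
\]
Since $b\le\gamma$, for $\gamma<\theta=\min(1,\sigma-1)$ the last term can be absorbed for large $N$, giving $\norm{Ac}_q\gtrsim N^{-\gamma}\norm{c}_q$. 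Your small-increment iteration in the variable $1/q$ then reaches every $q\in[1,\infty]$ in finitely many steps. No dyadic bootstrapping or weighted estimates are needed, and with this repair your argument is complete.
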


\subsection{Band operators}
 A bounded linear operator $A\in \mathcal L(\ell^1(\mathbb Z))$ is called a {band operator} if there exists a $w\in\mathbb N_0$ such that the matrix $(A_{kl})_{k,l \in \Z}$ corresponding to $A$ satisfies
$
A_{kl}=0
$
whenever $|k-l|>w$.
The smallest such $w$, when it exists, is called the bandwidth of $A$.

For $N\in \N$, let $P_N : \ell^1(\Z) \to \ell^1(\Z)$ denote the projection onto the coordinates $\{ -N, \dots, N \}$ and $Q_N: \ell^1(\Z) \to \ell^1(\Z)$ the projection onto the complement of $\{ -N, \dots, N \}$,

\begin{equation}\label{eq:projections}
    (P_N c)_k=
\begin{cases}
c_k,& \abs{k}\leq N\\
0,& \abs{k}>N
\end{cases},
\qquad \quad  Q_N=I-P_N.
\end{equation}

The next statement shows that the kernel of a surjective band operator cannot carry more degrees of freedom than are permitted by its bandwidth.

\begin{proposition}\label{prop:surjective_band_kernel_bound}
    Every surjective band operator $A\in\mathcal L(\ell^1(\Z))$ of
bandwidth $w\in\mathbb N_0$ satisfies
$
\dim\ker A\leq 2w .
$
\end{proposition}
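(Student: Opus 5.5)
Fix a surjective band operator $A$ of bandwidth $w$ on $\ell^1(\Z)$. The plan is to show that every element of $\ker A$ is determined by a bounded number of coordinates. Write $c\in\ker A$ and note that the equation $(Ac)_k=\sum_{|l-k|\le w}A_{kl}c_l=0$ is a linear recurrence of length $2w+1$. It would give $\dim\ker A\le 2w$ directly if the extreme coefficients $A_{k,k+w}$ (or $A_{k,k-w}$) were nonzero. In general they need not be, so I will not use the recurrence and will instead argue by duality using surjectivity.

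Suppose, for contradiction, that there are $2w+1$ linearly independent vectors $c^{(1)},\dots,c^{(2w+1)}\in\ker A$. Since they are independent in $\ell^1$, there is a finite window on which their restrictions are still independent. Take $N$ large so that the restrictions to $\{-N,\dots,N\}$ are independent, i.e.\ $P_Nc^{(j)}$ are independent. We now look at $A$ through the projections. Because $A$ is banded, $AP_N c = -AQ_N c$ for $c\in\ker A$. The vector $AQ_Nc$ is supported on $\{|k|> N-w\}$, while $AP_Nc$ is supported on $\{|k|\le N+w\}$. Hence $AP_Nc^{(j)}$ is supported in the two boundary strips $S=\{N-w<|k|\le N+w\}$, which have $4w$ points. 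This alone is too weak, so a finer count is needed.

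The step that actually uses surjectivity works with the adjoint. Since $A$ is surjective on $\ell^1$, the open mapping theorem gives $\gamma>0$ with $\|A^*y\|_\infty\ge\gamma\|y\|_\infty$ for all $y\in\ell^\infty$, so $A^*$ is injective and bounded below. The plan is to localize this. Consider the finite section $A_N:=P_{N+w}AP_N$, viewed as a map $\C^{2N+1}\to\C^{2N+2w+1}$. By the support computation, the kernel vectors give $2w+1$ independent vectors $P_Nc^{(j)}$ which $A_N$ maps into the span of the boundary coordinates $S$. Conversely, a lower bound for $A^*$ restricted to vectors supported in $\{|k|\le N-w\}$ lands in $\{|l|\le N\}$. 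This yields that $P_{N-w}A_N$ has rank $2N-2w+1$ with uniform bounds, i.e.\ its rows are independent. By rank–nullity, the subspace of $\C^{2N+1}$ mapped by $A_N$ into the coordinates $S$ (the kernel of $P_{N-w}A_N$) has dimension exactly $(2N+1)-(2N-2w+1)=2w$. The $2w+1$ vectors $P_Nc^{(j)}$ lie in this subspace and are independent, which is a contradiction.

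The main obstacle is the localization step: justifying that $P_{N-w}AP_N$ has full row rank $2N-2w+1$. I would derive it from injectivity of $A^*$: if $y$ is supported in $\{|k|\le N-w\}$, then $A^*y$ is supported in $\{|l|\le N\}$, so $(P_{N-w}AP_N)^*y = P_NA^*y=A^*y\neq0$ whenever $y\ne0$. This is exactly row independence, so only injectivity of $A^*$ is needed, and that follows since $A$ has dense (indeed full) range. With this in hand the count above closes the argument.
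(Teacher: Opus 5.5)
Your proposal is correct and is essentially the paper's argument. Your finite section $P_{N-w}AP_N\colon\mathbb C^{2N+1}\to\mathbb C^{2N-2w+1}$ is the paper's $\tilde A_{N-w}$; you show it is onto, use rank–nullity to get a $2w$-dimensional kernel, and note that restrictions of $\ker A$ embed into it. The only difference is how surjectivity of the finite section is obtained: you argue dually, via injectivity of $A^*$ on $\ell^\infty$ and the band support of $A^*y$. The paper lifts directly: it extends $y$ by zero, takes an $A$-preimage and restricts it, which avoids adjoints entirely.
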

\begin{proof}
Let $(A_{kl})_{k,l \in \Z}$ be the matrix corresponding to $A$. 
Since $A$ has bandwidth $w$, for every   $c \in \ell^1(\Z)$ the entry $(Ac)_k$ of is given by
$$
( Ac)_k=\sum_{|k-l|\leq w}A_{kl}c_l,
\quad k\in\mathbb Z.
$$

Let $N$ in $\N$, and consider the restriction
$$
J_N:\ell^1(\Z)\to\mathbb C^{2N+1},
\qquad
Jc=c|_{\{-N,\dots , N\}}.
$$
Further, let $\tilde A_N$ be the finite-dimensional operator
$$
\tilde A_N:\mathbb C^{2N+2w+1}\to\mathbb C^{2N+1},
\quad
(\tilde A_Nz)_k:=\sum_{|l|\leq N+w}A_{kl}z_l, \quad |k|\leq N.
$$
We claim that this map is surjective: if $y\in\mathbb C^{2N+1}$ is extended by zero to
$\tilde y\in \ell^1(\Z)$, choose $c\in \ell^1(\Z)$ with
$Ac=\tilde y$, and put $z:=J_{N+w}c$. 
If $|k|\leq N$ and
$|k-l|\le w$, then $|l|\leq N+w$. Hence, using also that
$A_{kl}=0$ whenever $|k-l|>w$, we obtain, for every $|k|\leq N$,
$$
(\tilde A_Nz)_k
=
\sum_{|l|\leq N+w}A_{kl}z_l
=
\sum_{|k-l|\le w}A_{kl}c_l
=
(Ac)_k
=
\widetilde y_k
=
y_k.
$$
Thus $\tilde A_Nz=y$.
As a surjective linear map between finite-dimensional spaces, rank-nullity gives
$$
\dim\ker \tilde A_N
=
2w .
$$

Now choose a finite-dimensional linear subspace $V\subseteq\ker A$.
Since the sequence of operators 
$(\mathrm{Id}_{\ell^1(\Z)} - P_N)_{N\in\N}$ converges pointwise to zero, the compactness of the unit ball in $V$ implies that
\begin{equation}\label{eq:QR}
    \| (\mathrm{Id}_{\ell^1(\Z)} - P_N)|_V \|_{1\to1} \to 0, \quad N \to \infty.
\end{equation}
Fix an $N$ large enough so that 
$$
\|(\mathrm{Id}_{\ell^1(\Z)}-P_N)|_V\|_{1\to1}<1.
$$
Then $P_N|_V : V \to \ell^1(\Z) $ is injective. 
Indeed, if $c \in V$ with $P_N|_V c = 0$, then $c = (\mathrm{Id}_{\ell^1(\Z)}-P_N)|_V c$. 
Using that $(\mathrm{Id}_{\ell^1(\Z)}-P_N)|_V$ is strictly contractive, we obtain $c=0$.

Now consider the restriction $J_{N+w}|_V$.
This  restriction is injective: if $J_{N+w}|_Vc=0$, then $P_N c =0$, and since $c \in V$, we have $c=0$ because $P_N|_V$ is injective (shown before). 
Moreover, if  $c\in V$, then $Ac=0$ (because $V$ is a subspace of the kernel of $A$) and so for every $|k|\leq N$ we have
$$
0 = (Ac)_k = \sum_{|l|\leq N+w} A_{kl} c_l = (\tilde A_N J_{N+w} c)_k.
$$
Hence, $\tilde A_N J_{N+w}c=0$ for all $c\in V$. Thus, $J_{N+w}$ embeds $V$
into $\ker \tilde{A}_N$,  and this fact  implies that
$$
\dim V\leq\dim\ker \tilde{A}_N=2w.
$$
Since $V$ was an arbitrary finite-dimensional subspace of $\ker A$,  we conclude that $\ker A$ is finite-dimensional, with $\dim \ker A\leq 2w$.
\end{proof}

\subsection{Surjectivity implies Fredholm}

Recall that a linear operator $A$ is called \emph{Fredholm} if $\operatorname{ran}A$ is closed and both $\dim\ker A$ and $\dim\operatorname{coker}A$ are finite-dimensional.
The purpose of this section is to prove that every surjective operator
on $\ell^1(\Z)$ whose matrix has polynomial off-diagonal decay is
Fredholm. The proof of this statement can be deduced from Seidel's
theory on semi-Fredholm band-dominated operators (limits of band
operators): since $A$ is surjective, it has closed range and has a
finite-dimensional cokernel. This implies that $A$ belongs to the
class of so-called lower semi-Fredholm operators. Since $A$ has
polynomial off-diagonal decay, it is a band-dominated operator and
therefore \cite[Thm.~4.3]{seidel2015semi} implies that $A$ is
Fredholm. We now present an elementary and self-contained proof  of
this statement that avoids the notions of $\mathcal{P}$-compactness
and Bernstein numbers used in Seidel's work \cite{seidel2015semi}.  

Observe that Proposition \ref{prop:surjective_band_kernel_bound} states that a surjective band operator $A\in \mathcal{L}(\ell^1(\Z))$ is a Fredholm operator.
\begin{theorem}\label{thm:surjectivity_implies_Fredholm}
If $A = (A_{kl})_{k,l \in \Z}$ possesses  polynomial off-diagonal
decay $\sigma >1$ and  is surjective on $\ell^1(\Z)$, then $A$ is Fredholm on $\ell^1(\Z)$. In particular, $A$ has a finite-dimensional kernel. 
\end{theorem}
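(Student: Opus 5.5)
Since $A$ is surjective, its range is all of $\ell^1(\Z)$, which is closed, and its cokernel is trivial. So the only thing to prove is that $\ker A$ is finite-dimensional. The plan is to approximate $A$ by band operators and use that surjectivity is stable under small perturbations. Then Proposition~\ref{prop:surjective_band_kernel_bound} bounds the kernel of the approximant, and this bound is transferred back to $A$.

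\textbf{Step 1 (band truncation).} For $w\in\N$, let $A_w$ be the truncation of $A$ to the band $|k-l|\le w$. By \eqref{eq:op_norm_bound} and the decay assumption,
$$\|A-A_w\|_{1\to1}\le \sup_l\sum_{|k-l|>w} C(1+|k-l|)^{-\sigma}\to0 .$$

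\textbf{Step 2 (surjectivity persists).} Since $A$ is surjective, the open mapping theorem gives $\gamma>0$ such that for every $y$ there is $c$ with $Ac=y$ and $\|c\|_1\le\gamma^{-1}\|y\|_1$. A standard iteration (Neumann-type series with these preimages) shows that every $B$ with $\|A-B\|<\gamma$ is again surjective. Fix $w$ with $\|A-A_w\|<\gamma/2$. Then $A_w$ is a surjective band operator, and Proposition~\ref{prop:surjective_band_kernel_bound} gives $\dim\ker A_w\le 2w$.

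\textbf{Step 3 (transfer the kernel bound).} This is the main obstacle, because $\ker A$ and $\ker A_w$ are not directly related: the kernel dimension of a surjective operator is upper semicontinuous only once finiteness is known. I would use the argument of Proposition~\ref{prop:surjective_band_kernel_bound} itself, with an error term.

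Let $V\subseteq\ker A$ be a finite-dimensional subspace, and choose $N$ so large that $\|Q_N|_V\|\le\varepsilon$. The vectors in $V$ are concentrated on $[-N,N]$, so the finite matrix $\tilde A_{N,w}$ (rows $|k|\le N$, columns $|l|\le N+w$) nearly annihilates $J_{N+w}V$:
$$\|\tilde A_{N,w}J_{N+w}c\|\le \|(A_w-A)c\| + \text{tail}\le (\gamma/2+\varepsilon')\|c\| .$$
Meanwhile, $\tilde A_{N,w}$ is surjective with the uniform lower bound inherited from $A_w$: every $y\in\C^{2N+1}$ has a preimage of norm at most $C\gamma^{-1}\|y\|$, with $C$ independent of $N$. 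Quotienting by its kernel, $\tilde A_{N,w}$ is bounded below on a complement $W$ of $\ker\tilde A_{N,w}$, with constant roughly $\gamma/2$.

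If $\dim V>2w$, then $J_{N+w}V$ has dimension greater than $2w=\dim\ker\tilde A_{N,w}$. A Riesz-lemma or dimension argument for finite-dimensional subspaces then produces $c\in V$, $\|c\|=1$, with $\operatorname{dist}(J_{N+w}c,\ker\tilde A_{N,w})$ close to $1$, hence $\|\tilde A_{N,w}J_{N+w}c\|\gtrsim\gamma/2$ (in $\ell^1$ one must allow a constant loss). Getting the constants to contradict the upper bound requires care. If needed, I would pick the truncation much finer, $\|A-A_w\|\ll\gamma$, and accept a bound like $\dim\ker A\le K(w)$ rather than exactly $2w$; any finite bound suffices.

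An alternative route for Step 3, which I might prefer, is to write $A=A_w(I+A_w^{+}(A-A_w))$ using a bounded right inverse $A_w^{+}$. Then $\ker A\subseteq$ the preimage of $\ker A_w$ under $I+A_w^{+}(A-A_w)$, and this operator is invertible since $\|A_w^{+}(A-A_w)\|<1$. It follows that $\dim\ker A\le\dim\ker A_w\le 2w$. Here $A_w^{+}$ exists with norm at most about $2/\gamma$ by the perturbation argument of Step~2. In $\ell^1$, lifting is possible because $\ell^1$ is projective, so a bounded linear right inverse exists for any surjection onto $\ell^1$. This cleanly finishes the proof.
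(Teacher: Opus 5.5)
Your proof is correct and is essentially the paper's argument. The decisive part is the alternative route in Step~3: you factor $A = A_w\bigl(I + A_w^{+}(A-A_w)\bigr)$, where $A_w^{+}$ is a bounded right inverse coming from the lifting property of $\ell^1$, and then invoke Proposition~\ref{prop:surjective_band_kernel_bound}. The paper instead builds a right inverse $R$ of $A$ itself, from bounded preimages of the unit vectors $e_l$, and writes $A_n = A(I - RE)$ with $E = A - A_n$. That gives surjectivity of $A_n$ and $\dim\ker A = \dim\ker A_n$ in one stroke, so your separate iteration in Step~2 is not needed.

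Drop your first, Riesz-lemma version of Step~3. As written it is incomplete, because Riesz's lemma does not produce a vector inside the prescribed subspace $J_{N+w}V$, and the factorization argument makes it unnecessary anyway.
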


\begin{proof}
It suffices to show that $A$ has a finite-dimensional kernel.

\textbf{Step 1. A perturbation argument.}
Let $\tilde{A},R\in \mathcal{L}(\ell^1(\Z))$ with
\begin{equation} \label{eq:c1}
    AR=\mathrm{Id}_{\ell^1(\Z)},\quad \|A-\tilde{A}\|_{1\to 1} \norm{R}_{1\to 1} <1.
\end{equation}
We claim that there exists a bounded invertible operator $\Phi$ with $A=\tilde{A}\Phi$. In particular, $\tilde{A}$ is surjective and
\begin{equation}\label{eq:same_kernel}
    \dim \ker A= \dim\ker \tilde{A} .
\end{equation}

Set $E=A-\tilde{A}$. By assumption~\eqref{eq:c1}, $\|ER\|_{1\to 1}<1$. 
Thus, the operator $\mathrm{Id}_{\ell^1(\Z)}- ER$ is invertible. Hence, from 
\begin{equation}
    \tilde{A}R = (A- E)R =\mathrm{Id}_{\ell^1(\Z)} -  ER,
\end{equation}
it follows that $\tilde{A}$ is surjective.
We set 
\begin{align}
    \Phi & = \mathrm{Id}_{\ell^1(\Z)} +R\sum\limits_{l=0}^\infty (ER)^l E
     = \mathrm{Id}_{\ell^1(\Z)} +\sum\limits_{l=1}^\infty (RE)^l = (\mathrm{Id}_{\ell^1(\Z)} -RE)^{-1}.
\end{align}
Clearly, $\Phi$ is invertible with inverse $\mathrm{Id}_{\ell^1(\Z)}
-RE$, and thus 
\begin{equation}
    A\Phi^{-1} = A(\mathrm{Id}_{\ell^1(\Z)} -RE) = A- ARE = A-E = \tilde{A}.
\end{equation}
\textbf{Step 2. Existence of a right inverse. }
Since $A$ is surjective, the open mapping theorem implies that there exists a constant
$C>0$ such that for every $c\in {\ell^1(\Z)}$ there exists $b\in {\ell^1(\Z)}$ with
$$
Ab=c,
\quad
\|b\|_1\leq C\|c\|_1 .
$$
For each $l\in\mathbb Z$, choose $b^{(l)}\in {\ell^1(\Z)}$ such that
$$
Ab^{(l)}=e_l,
\quad
\|b^{(l)}\|_1\leq C,
$$
and define
$$
Rc:=\sum_{l\in\mathbb Z}c_l b^{(l)} .
$$
This series converges absolutely in ${\ell^1(\Z)}$, and
$$
\|Rc\|_1\leq C\|c\|_1 .
$$
Thus, $R\in\mathcal L({\ell^1(\Z)})$. Since the defining series is absolutely
convergent, applying $A$ termwise gives
$$
ARc=\sum_{l\in\mathbb Z}c_lAb^{(l)}
=\sum_{l\in\mathbb Z}c_le_l
=c.
$$
Therefore, we have $AR=\mathrm{Id}_{\ell^1(\Z)}$.

\textbf{Step 3. Approximation by band operators.}
Let $A_n$ be the band truncation of $A$  defined by
$$
(A_n)_{kl}=
\begin{cases}
A_{kl}, & |k-l|\leq n,\\
0, & |k-l|>n .
\end{cases}
$$
Then 
$
\|A-A_n\|_{1\to1} \to 0
$
as $n \to \infty$. Indeed, 
   let $C>0$, $\sigma>1$ be such that 
   \begin{equation}
       |A_{kl}|\le C(1+|k-l|)^{-\sigma},
\quad k,l\in\mathbb Z.
   \end{equation}
   Then by \eqref{eq:op_norm_bound}, we have
   \begin{align}
       \norm{A-A_n}_{1 \to 1} & =\sup\limits_{l\in\Z}\sum\limits_{k\in\Z}|(A-A_n)_{kl}| = \sup\limits_{l\in\Z}\sum\limits_{|k-l|>n}|A_{kl}| \\
       & \leq  C \sup\limits_{l\in\Z}\sum\limits_{|k-l|>n} (1+|k-l|)^{-\sigma} 
       = C \sum\limits_{|j|>n} (1+|j|)^{-\sigma}.
   \end{align}
   Since $\sum\limits_{j\in\Z} (1+|j|)^{-\sigma}$ converges, $\norm{A-A_n}_{1\to 1}\to 0$ as $n\to \infty$.

\textbf{Step 4.}
Fix a large enough $n$ so that
$$
\|A-A_n\|_{1\to1}\,\|R\|_{1\to1}<1 .
$$

By Step 1, $A_n$ is surjective. 
Applying \eqref{eq:same_kernel} and Proposition~\ref{prop:surjective_band_kernel_bound} to the surjective band operator $A_n$, we obtain
\begin{equation}
    \dim\ker A= \dim\ker A_n \leq 2n.
\end{equation}
\end{proof}
\begin{remark} {\rm
  The above proof is streamlined for $\ell ^1(\Z )$, but it contains 
  several more general facets. For instance, the perturbation argument
  works on arbitrary Banach spaces and, in particular, implies the
  well-known statement that the Fredholm index is locally
  constant. Step~2 constructs a bounded right-inverse for operators on
  $\ell ^1(\Z )$; such a bounded inverse exists always on $\ell ^2(\Z
  )$ via the pseudo-inverse. On general Banach spaces, one must assume
  that the projection onto the kernel of $A$ is bounded. With that
  additional assumption, 
  Theorem~\ref{thm:surjectivity_implies_Fredholm} holds also for Banach spaces.  }
\end{remark}

\subsection{Limit operators and injectivity}

Let the projections $P_N$ and $Q_N$ be defined as in \eqref{eq:projections}. 
To that, we define for $m \in \Z$ the shift operator $T_m : \ell^1(\Z) \to \ell^1(\Z)$ by
$$
(T_m c)_k=c_{k-m}.
$$
We now introduce the notions of $\mathcal{P}$-topology and operator spectrum, see \cite{lindner2006infinite,rabinovitch2004limit}.

\begin{definition}
A sequence $(A_n)_{n \in \N} \subset \mathcal{L}(\ell^1(\Z))$ of bounded linear operators on $\ell^1(\Z)$ is said to
converge to $A \in \mathcal{L}(\ell^1(\Z))$ in the \emph{$\mathcal P$-topology} if for every
$N\in \N$ 
$$
\lim_{n \to \infty} \left(\norm{P_N(A_n-A)}_{1\to1}+\norm{(A_n-A)P_N}_{1\to1}\right) = 0.
$$
An operator $B \in \mathcal{L}(\ell^1(\Z))$ is called a \emph{limit operator} of $A$, if there exists a sequence $(m_n)_{n \in \N}\subset \Z$ with $\abs{m_n}\to \infty$ so that
$$
T_{-m_n}AT_{m_n}\to B
$$
in the $\mathcal P$-topology. The set of all limit operators of $A$ is called the
\emph{operator spectrum} of $A$ and is denoted $\sigma_{\mathrm{op}}(A)$.
An operator $A \in \mathcal{L}(\ell^1(\Z))$ is called \emph{self similar} if $A\in \sigma_{\mathrm{op}}(A).$
\end{definition}

The next theorem concerns limit operators of Fredholm operators on $\ell^1(\Z)$. To place it in context, recall first the stronger result available in the band-dominated setting. Every band-dominated operator on $\ell^1(\Z)$ is rich, and, in this setting, $\mathcal P$-Fredholmness is equivalent to ordinary Fredholmness. Consequently, a band-dominated operator $A$ on $\ell^1(\Z)$ is Fredholm if and only if every $B\in\sigma_{\mathrm{op}}(A)$ is invertible, see \cite[Thm.~5.7]{seidel2015semi} and \cite[Thm.~11]{lindner2014affirmative}. In the theorem below, $A\in\mathcal L(\ell^1(\Z))$ is an arbitrary bounded operator and is not assumed to be band-dominated. Although the cited results do not apply in this generality, we show that Fredholmness of $A$ still implies that every limit operator $B\in\sigma_{\mathrm{op}}(A)$ is injective.


\begin{theorem}\label{thm:selfsimilarinjective}
If $A \in \mathcal L(\ell^1(\Z))$ is Fredholm, then every
$B \in \sigma_{\mathrm{op}}(A)$ is injective.
\end{theorem}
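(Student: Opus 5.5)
The plan is to turn Fredholmness of $A$ into a lower bound on $\ell^1(\Z)$ that fails only on a fixed finite window $\{-M,\dots,M\}$. Shifting then moves any test vector away from that window, and $\mathcal P$-convergence carries the bound over to $B$.

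\textbf{Step 1 (a coercive estimate for $A$).} I will show there exist $M\in\N$ and $C>0$ such that
$$
\|x\|_1\le C\big(\|Ax\|_1+\|P_Mx\|_1\big),\qquad x\in\ell^1(\Z).
$$
Since $\ker A$ is finite-dimensional, it has a closed complement $X_1$ with a bounded projection, so $\ell^1(\Z)=\ker A\oplus X_1$. Because $\operatorname{ran}A$ is closed, the open mapping theorem makes $A|_{X_1}$ bounded below: $\|x_1\|_1\le C_1\|Ax_1\|_1$.

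On the finite-dimensional space $\ker A$ I argue as in the proof of Proposition~\ref{prop:surjective_band_kernel_bound}. Compactness of its unit ball gives $\|(\mathrm{Id}-P_M)|_{\ker A}\|_{1\to1}\le\tfrac12$ for $M$ large, hence $\|x_0\|_1\le 2\|P_Mx_0\|_1$ for $x_0\in\ker A$.

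Now write $x=x_0+x_1$. Then $Ax=Ax_1$, so $\|x_1\|_1\le C_1\|Ax\|_1$. Also
$$
\|x_0\|_1\le 2\|P_Mx_0\|_1\le 2\big(\|P_Mx\|_1+\|x_1\|_1\big).
$$
Adding these bounds gives the claimed estimate. I expect this step to be the only real obstacle: it is where Fredholmness enters, through the finite-dimensional kernel and the closed range.

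\textbf{Step 2 (transfer to shifted truncations).} Let $B\in\sigma_{\mathrm{op}}(A)$ be given by $T_{-m_n}AT_{m_n}\to B$ in the $\mathcal P$-topology, with $|m_n|\to\infty$. Suppose $Bx=0$, fix $L\in\N$, and apply Step 1 to $y_n:=T_{m_n}P_Lx$.

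Since $T_m$ is an isometry, $\|y_n\|_1=\|P_Lx\|_1$. Moreover,
$$
\|Ay_n\|_1=\|T_{-m_n}AT_{m_n}P_Lx\|_1\longrightarrow\|BP_Lx\|_1,
$$
because $\|(T_{-m_n}AT_{m_n}-B)P_L\|_{1\to1}\to0$ by the definition of $\mathcal P$-convergence. Finally, $y_n$ is supported in $\{m_n-L,\dots,m_n+L\}$, so $P_My_n=0$ once $|m_n|>M+L$. Letting $n\to\infty$ therefore yields
$$
\|P_Lx\|_1\le C\|BP_Lx\|_1 .
$$

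\textbf{Step 3 (conclusion).} Let $L\to\infty$. Then $P_Lx\to x$ in $\ell^1(\Z)$, and since $B$ is bounded, $BP_Lx\to Bx=0$. The inequality of Step 2 then gives $\|x\|_1\le C\|Bx\|_1=0$, so $x=0$ and $B$ is injective.

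In fact the argument shows that every limit operator satisfies the uniform lower bound $\|Bx\|_1\ge C^{-1}\|x\|_1$. No band-dominance of $A$ is needed, consistent with the remark preceding Theorem~\ref{thm:selfsimilarinjective}.
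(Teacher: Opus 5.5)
Your proof is correct, and it reorganizes the paper's argument rather than reproducing it. The paper keeps two ingredients apart. It cites Kato for the closed-range estimate $\norm{Ac}_1\ge C\dist(c,\ker A)$. It proves separately that $\dist(T_{m_n}u,\ker A)\to\norm{u}_1$ for finitely supported $u$, using that the finite-dimensional kernel is concentrated on a window $\{-M,\dots,M\}$. It then argues by contradiction, truncating a kernel vector of $B$ and balancing error terms against $C/8$.

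You fuse both ingredients into the single estimate $\norm{x}_1\le C(\norm{Ax}_1+\norm{P_Mx}_1)$, which you prove from a complement of $\ker A$ and the bounded inverse theorem instead of citing Kato. Shifting kills the $P_M$ term, and $\mathcal P$-convergence on $P_L$ transfers the bound to $B$ directly, with no contradiction and no $\varepsilon$-bookkeeping.

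The gain is the one you point out. Your Step 2 never uses $Bx=0$, so it yields $\norm{Bx}_1\ge C^{-1}\norm{x}_1$ for all finitely supported $x$, and hence for all $x$ by density. Here $C$ depends only on $A$, so every limit operator is bounded below with a uniform constant. The paper's ingredients would give the same bound if one passed to the limit directly, but the paper records only injectivity, which is all Theorem~\ref{thm:semimain} needs. Both proofs use only the closed range and $\dim\ker A<\infty$, so both hold for upper semi-Fredholm $A$.
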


\begin{proof}
Let $B\in\sigma_{\mathrm{op}}(A)$. By definition of the operator spectrum,
there exists a sequence $(m_n)_{n\in\mathbb N}\subset\Z$ with
$\abs{m_n}\to\infty$ such that
$$
A_n:=T_{-m_n}AT_{m_n}\to B
$$
in the $\mathcal P$-topology. Hence, for every $N\in\N$,
\begin{equation}\label{eq:p-topology-column}
\lim_{n\to\infty}\norm{(A_n-B)P_N}_{1\to1}=0.
\end{equation}
Set $V:=\ker A$. Since $A$ is Fredholm, $V$ is finite-dimensional and
$\operatorname{ran}A$ is closed.

\textbf{Step 1.}
We first prove that for every finitely supported non-zero sequence $u\in \ell^1(\Z)$,
\begin{equation}\label{eq:shifted-distance-limit}
\dist(T_{m_n}u,V)\to \norm{u}_1,
\end{equation}
where $\dist(c,V):=\inf_{v\in V}\norm{c-v}_1$.

Fix $0<\varepsilon<1/2$. Since $V$ is finite-dimensional, the compactness of the unit ball in $V$
yields an $M\in\N$ such that
\begin{equation}\label{eq:kernel-tail-small}
\norm{Q_M v}_1\le \varepsilon\norm{v}_1,
\quad \text{ for all }  v\in V,
\end{equation}
cf. \eqref{eq:QR}.
Let $S_n:=\operatorname{supp}(T_{m_n}u)$. Since $u$ has finite support and
$\abs{m_n}\to\infty$, there exists an $n_0$ such that for all $n>n_0$
\begin{equation}\label{eq:shifted-support-away}
S_n\subset \Z\setminus\{-M,\ldots,M\}.
\end{equation}
Fix $n>n_0$ and let $v\in V$. If $\norm{v}_1>2\norm{u}_1$, then
\begin{equation}\label{eq:large-kernel-vector-distance}
\norm{T_{m_n}u-v}_1
\ge
\norm{v}_1-\norm{T_{m_n}u}_1
=
\norm{v}_1-\norm{u}_1
>
\norm{u}_1
\ge
(1-2\varepsilon)\norm{u}_1.
\end{equation}
On the other hand, if $\norm{v}_1\le 2\norm{u}_1$, then
$$
\norm{Q_M v}_1
\le
\varepsilon\norm{v}_1
\le
2\varepsilon\norm{u}_1.
$$
Since $T_{m_n}u$ is supported on
$S_n\subset\Z\setminus\{-M,\ldots,M\}$, we have
$Q_MT_{m_n}u=T_{m_n}u$. Hence
\begin{equation}\label{eq:small-kernel-vector-distance}
\begin{aligned}
\norm{T_{m_n}u-v}_1
&\ge
\norm{Q_M(T_{m_n}u-v)}_1 \\
&=
\norm{T_{m_n}u-Q_Mv}_1 \\
&\ge
\norm{T_{m_n}u}_1-\norm{Q_Mv}_1 \\
&\ge
(1-2\varepsilon)\norm{u}_1 .
\end{aligned}
\end{equation}
Combining the two cases, we obtain, for all $n>n_0$,
$$
\norm{T_{m_n}u-v}_1
\ge
(1-2\varepsilon)\norm{u}_1,
\quad v\in V.
$$
Taking the infimum over $v\in V$ gives
$$
\dist(T_{m_n}u,V)
\ge
(1-2\varepsilon)\norm{u}_1.
$$
Therefore
$$
\liminf_{n\to\infty}\dist(T_{m_n}u,V)
\ge
(1-2\varepsilon)\norm{u}_1.
$$
Letting $\varepsilon\downarrow0$, we obtain
$$
\liminf_{n\to\infty}\dist(T_{m_n}u,V)\ge \norm{u}_1.
$$
The reverse inequality always holds,  because $0\in V$, and hence
$$
\dist(T_{m_n}u,V)
\le
\norm{T_{m_n}u}_1
=
\norm{u}_1.
$$
This proves \eqref{eq:shifted-distance-limit}.

\textbf{Step 2.} Suppose, towards a contradiction, that $B$ is not injective.
By \cite[Thm.~5.2, p.~231]{kato1980perturbation},  
there exists a $C>0$ such that
\begin{equation}\label{eq:fredholm-lower-estimate}
\norm{Ac}_1\ge C\,\dist(c,V),
\quad \text{ for all } c\in\ell^1(\Z).
\end{equation}
Since $B$ is not injective, there exists
$0\ne c\in\ell^1(\Z)$ such that $Bc=0$. Since $P_Nc\to c$ and
$Q_Nc\to0$ in $\ell^1(\Z)$, and since
$\norm{P_Nc}_1\to\norm{c}_1>0$, we may choose $N\in\N$ such that
$$
u:=P_Nc\ne0
$$
and
\begin{equation}\label{eq:tail-choice}
\norm{B}_{1\to1}\,\norm{Q_Nc}_1
<
\frac C8\norm{u}_1.
\end{equation}
Using $Bc=0$, we get
$$
Bu=B(P_Nc)=B(P_Nc-c)=-BQ_Nc,
$$
and therefore
\begin{equation}\label{eq:Bu-small}
\norm{Bu}_1
\le
\norm{B}_{1\to1}\,\norm{Q_Nc}_1
<
\frac C8\norm{u}_1.
\end{equation}
Since $u=P_Nu$, equation \eqref{eq:p-topology-column} implies that
$$
\norm{(A_n-B)u}_1
\le
\norm{(A_n-B)P_N}\,\norm{u}_1
\to 0.
$$
Hence, for all sufficiently large $n$, we have
\begin{equation}\label{eq:Aj-minus-B-small}
\norm{(A_n-B)u}_1
<
\frac C8\norm{u}_1.
\end{equation}
Also, by \eqref{eq:shifted-distance-limit},
\begin{equation}\label{eq:distance-large}
\dist(T_{m_n}u,V)
>
\frac34\norm{u}_1,
\end{equation}
for all sufficiently large $n$. Choose $n$ large enough so  that both \eqref{eq:Aj-minus-B-small} and
\eqref{eq:distance-large} hold.
Since 
$T_{m_n}$ is an isometry, we obtain
\begin{equation}\label{eq:Av-upper}
\begin{aligned}
\norm{AT_{m_n}u}_1
&=
\norm{T_{-m_n}AT_{m_n}u}_1 \\
&=
\norm{A_nu}_1 \\
&\le
\norm{(A_n-B)u}_1+\norm{Bu}_1 \\
&<
\frac C4\norm{u}_1.
\end{aligned}
\end{equation}
On the other hand, applying \eqref{eq:fredholm-lower-estimate} to
$z=T_{m_n}u$, and then using \eqref{eq:distance-large}, gives
\begin{equation}\label{eq:Av-lower}
\norm{AT_{m_n}u}_1
\ge
C\,\dist(T_{m_n}u,V)
>
\frac{3C}{4}\norm{u}_1.
\end{equation}
The estimates \eqref{eq:Av-upper} and \eqref{eq:Av-lower} contradict each
other. Hence, $B$ is injective.
\end{proof}

\section{Proof of Theorem \ref{thm:main}}

We now combine the results of the previous sections to prove the main theorem.
By the operator-theoretic criterion formulated in
Proposition~\ref{prop:submatrix} above, it remains to construct, 
for each coset $x+\alpha\mathbb Z$, a perturbation sequence satisfying two
properties. First, the associated pre-Gramian must be self-similar, so that the
injectivity result from limit-operator theory applies. Second, the
perturbations must stay within the interval determined by the unique
zero of the Zak-transform, so that  surjectivity of the pre-Gramian
can be  ensured by
Proposition~\ref{prop:surjective}. The next two results establish these
properties.

We begin with a technical lemma that details the continuous dependence
of the pre-Gramian $G_\delta $ on $\delta $. 
\begin{lemma}\label{lem:uniformcontinuity}
Let $g:\mathbb R\to\mathbb C$ be a continuous function with polynomial
decay of order $\sigma>1$, and  let $R>0$. Then the following holds.

\begin{enumerate}[(i)]
\item Assume that  $(u_n)_{n\in\mathbb N}\subset \R$  and $
  \lim_{n\to\infty}u_n=u, $
then
\begin{equation}
  \label{eq:c6}
\lim_{n\to\infty}
\sup_{x\in\mathbb R}|g(x+u_n)-g(x+u)|=0.
\end{equation}
\item Pointwise convergence:   Let $(u_n(k))_{n \in \N} \subset [-R,R]$ and $u(k) \in [-R,R]$, $k \in \Z$. Assume that for every $k\in\mathbb Z$, we have
$
\lim_{n\to\infty}u_n(k)=u(k).
$
Then for every $l\in\mathbb Z$,
\begin{equation}
  \label{eq:c9}
\lim_{n\to\infty}
\sum_{k\in\mathbb Z}
|g(k-l+u_n(k))-g(k-l+u(k))|
=0.
\end{equation}
\item Uniform convergence:  Let $ u_n = (u_n(k))_{k \in \Z} \subset
  [-R,R]$ and $u = (u(k))_{k\in \Z}  \subset  [-R,R]$, $k \in
  \Z$. Assume that 
$
\lim_{n\to\infty} \sup _{k\in \Z } |u_n(k)-u(k)| = 0.
$
Then the corresponding pre-Gramian matrices $G_{u_n}$ converge in norm
to $G_u$:
$$
\lim _{n\to \infty } \|G_{u_n} - G_u\|_{1 \to 1}  = 0 \, .
$$
\end{enumerate}
\end{lemma}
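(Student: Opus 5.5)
For part (i), I use uniform continuity of $g$. A continuous function with polynomial decay is bounded, and it is uniformly continuous on $\R$. To see this, fix $\varepsilon>0$ and choose $T$ with $|g(t)|<\varepsilon/2$ for $|t|>T$. On the compact set $[-T-1,T+1]$, $g$ is uniformly continuous. For $|h|<1$ and $|x|>T+1$, both $|x|>T$ and $|x+h|>T$, so $|g(x+h)-g(x)|<\varepsilon$. Hence $\omega(h):=\sup_{x}|g(x+h)-g(x)|\to0$ as $h\to0$. Substituting $x\mapsto x-u$ gives $\sup_x|g(x+u_n)-g(x+u)|=\omega(u_n-u)\to0$, which is \eqref{eq:c6}.

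For part (ii), I use dominated convergence on the sum over $k$. Fix $l$. Each term $|g(k-l+u_n(k))-g(k-l+u(k))|$ tends to $0$ as $n\to\infty$, by continuity of $g$ and $u_n(k)\to u(k)$. For domination: since $|u_n(k)|,|u(k)|\le R$, the decay bound gives, for $|k-l|\ge 2R$,
$$|g(k-l+u_n(k))|\le C(1+|k-l|-R)^{-\sigma}\le C'(1+|k-l|)^{-\sigma},$$
and the same holds for $u(k)$. For the finitely many $k$ with $|k-l|<2R$, the terms are bounded by $2\|g\|_\infty$. So every term is bounded by the summable sequence $2C''(1+|k-l|)^{-\sigma}$, independently of $n$. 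Dominated convergence (or Tannery's theorem) then yields \eqref{eq:c9}.

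For part (iii), I use the column-sum formula \eqref{eq:op_norm_bound}:
$$\|G_{u_n}-G_u\|_{1\to1}=\sup_l\sum_k|g(k-l+u_n(k))-g(k-l+u(k))|.$$
This has to be made uniform in $l$, and that is the only real point of the proof. Given $\varepsilon>0$, I choose $K$ with $\sum_{|j|>K}2C'(1+|j|)^{-\sigma}<\varepsilon$, using the uniform domination bound from (ii), valid for all $|j|\ge 2R$; I take $K\ge 2R$. This controls the tail $|k-l|>K$ by $\varepsilon$ for every $l$ and every $n$. Each of the remaining $2K+1$ terms is at most $\omega(u_n(k)-u(k))\le\sup_{|h|\le\eta_n}\omega(h)$, where $\eta_n=\sup_k|u_n(k)-u(k)|\to0$. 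This uses the uniform modulus of continuity from (i). So the head contributes at most $(2K+1)\sup_{|h|\le\eta_n}\omega(h)\to0$, uniformly in $l$. Taking $\limsup_n$ and then letting $\varepsilon\to0$ finishes the proof.

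The main obstacle is this uniformity in $l$ in (iii). It is resolved by combining the shift-invariant tail bound, which uses $|u|\le R$, with the global uniform continuity of $g$ from (i).
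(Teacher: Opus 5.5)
Your proof is correct and follows the paper's route: uniform continuity of $g$ for (i), dominated convergence with a majorant $C(1+|k-l|)^{-\sigma}$ that depends only on $R$ for (ii), and for (iii) a tail estimate uniform in $l$ and $n$ combined with the uniform modulus of continuity on the $2K+1$ head terms. The only slip is cosmetic: in (i) the compact set should be $[-T-2,T+2]$ rather than $[-T-1,T+1]$, so that for $|x|\le T+1$ and $|h|<1$ both $x$ and $x+h$ lie in it.
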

\begin{proof}
(i) Since $g$ is continuous and vanishes at infinity, $g$ is in fact
uniformly continuous. The claim  \eqref{eq:c6} follows from the
uniform continuity. 

(ii) Let $b_k := \sup _{|x|\leq R} (1+|k+x|)^{-\sigma } \leq C_R
(1+|k|)^{-\sigma }$. Since $\sigma >1$, the sequence $b= (b_k)_{k\in
  \Z }$ is summable.

Then for all $n\in\mathbb N$ and $k\in\mathbb Z$, we have 
$$
|g(k-l+u_n(k))-g(k-l+u(k))|
\leq
|g(k-l+u_n(k))|+|g(k-l+u(k))|
\leq 2b_{k-l}.
$$
Consequently, given $\varepsilon >0$, we can find an $N\in \N$, such that
\begin{align} \label{eq:c7}
  \sum_{k : |k-l| >N }
|g(k-l+u_n(k))-g(k-l+u(k))| \leq 2 \sum _{k : |k-l| >N } b_{k-l} <
  \varepsilon \, 
\end{align}
independent of $l$. 
 Fix $k,l\in\mathbb Z$. Since $u_n(k)\to u(k)$ as $n \to \infty$ and
 since $g$ is continuous, we obtain 
$$
|g(k-l+u_n(k))-g(k-l+u(k))| \to 0,
\quad n\to\infty.
$$
The sum in \eqref{eq:c9}  consists of terms that converge to $0$ and is dominated by
the summable sequence $(b_k)_{k\in\Z}$. Thus an application of the dominated convergence theorem yields the claim.

(iii) We argue similar to (ii). Given $\varepsilon >0$, we find $N  \in
\N $, such that  for all $l\in \Z $ 
$$
\sum_{k : |k-l| >N }
|g(k-l+u_n(k))-g(k-l+u(k))|  < \varepsilon /2\, .
$$
Since $g$ is uniformly continuous,  there is $\eta >0$ such that
$|u-v|<\eta$ implies 
$\sup _{x\in \R } |g(x+u) - g(x+v)| < \frac{\varepsilon}{
  2(2N+1)}$. Using the uniform convergence of $u_n $ to $ u$, we find
$n_0 \in \N$ such that $\sup _{k\in\Z } |u_n(k) - u(k)| < \eta $ for
$n\geq n_0$. Then 
$$
\sum_{k : |k-l| \leq N }
|g(k-l+u_n(k))-g(k-l+u(k))|  \leq (2N+1) \sup _{x\in \R }
|g(x+u_n(k))-g(x+u( k))| <  \varepsilon /2\, , 
$$
which is again independent of $l$.

Adding the contributions for $|k-l|\leq N$ and $|k-l|>N$, we obtain
for all $n\geq n_0$
$$
\|G_{u_n } - G_u\|_{1\to 1} = \sup _{l\in \Z } \sum _{k\in \Z }
|g(k-l+u_n(k))-g(k-l+u(k))| < \varepsilon \, .
$$
\end{proof}

Lemma \ref{lem:uniformcontinuity} provides the key step for establishing self-similarity. It shows that pointwise convergence of bounded perturbation sequences implies convergence of the corresponding pre-Gramians in the
$\mathcal P$-topology. Consequently, every recurrent perturbation sequence gives rise to a self-similar pre-Gramian.

\begin{proposition}
\label{thm:preG_selfsim}
    Let $g:\R\to\C$ be a continuous function with polynomial decay of order $\sigma>1$, and $\delta=(\delta_k)_{k\in\Z}$ a bounded sequence.
    If there exists a sequence  $(m_n)_{n\in\Z} \subseteq \Z$ with $|m_n|\to\infty$ as $n\to\infty$ such that 
    \begin{equation} \label{eq:c2} 
        \lim\limits_{n\to\infty} \delta_{k+m_n}=\delta_k,\quad k\in\Z,
    \end{equation}
    then the pre-Gramian
    $\preG[\delta]=(g(k+\delta_k-l))_{k,l\in\Z}$ is self-similar. 
\end{proposition}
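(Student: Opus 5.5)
We show directly that $T_{-m_n}\preG[\delta]T_{m_n}\to\preG[\delta]$ in the $\mathcal P$-topology. First we compute the conjugated matrix. Since $(T_mc)_k=c_{k-m}$, we have $(T_{-m}AT_m)_{kl}=A_{k+m,l+m}$. Hence
$$
(T_{-m_n}\preG[\delta]T_{m_n})_{kl}=g(k+m_n+\delta_{k+m_n}-l-m_n)=g(k-l+\delta_{k+m_n}).
$$
This is exactly the pre-Gramian $\preG[u_n]$ with the shifted perturbation $u_n(k):=\delta_{k+m_n}$. Set $R:=\sup_k|\delta_k|<\infty$. Then all $u_n(k)$ and $u(k):=\delta_k$ lie in $[-R,R]$, and by \eqref{eq:c2} we have $u_n(k)\to u(k)$ for every $k$.

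Next we treat the term $\|(\preG[u_n]-\preG[u])P_N\|_{1\to1}$. By Schur's formula \eqref{eq:op_norm_bound}, this norm equals
$$
\max_{|l|\le N}\sum_{k\in\Z}|g(k-l+u_n(k))-g(k-l+u(k))|.
$$
For each fixed $l$ the sum tends to $0$ by Lemma~\ref{lem:uniformcontinuity}(ii). Since only finitely many $l$ occur, the maximum tends to $0$ as well.

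Then we treat the term $\|P_N(\preG[u_n]-\preG[u])\|_{1\to1}$. This norm equals
$$
\sup_{l\in\Z}\sum_{|k|\le N}|g(k-l+u_n(k))-g(k-l+u(k))|.
$$
Here the supremum runs over all $l$, so the main obstacle is uniformity in $l$. We handle it as follows. Only the finitely many indices $k$ with $|k|\le N$ appear, and for each such $k$ we have
$$
\sup_{l}|g(k-l+u_n(k))-g(k-l+u(k))|\le\sup_{x\in\R}|g(x+u_n(k))-g(x+u(k))|.
$$
The right-hand side tends to $0$ by Lemma~\ref{lem:uniformcontinuity}(i), since $u_n(k)\to u(k)$. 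Summing over the $2N+1$ values of $k$ gives the uniform bound.

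Combining the two estimates shows $T_{-m_n}\preG[\delta]T_{m_n}\to\preG[\delta]$ in the $\mathcal P$-topology. Since $|m_n|\to\infty$, this means $\preG[\delta]\in\sigma_{\mathrm{op}}(\preG[\delta])$. Finally, $\preG[\delta]$ is a bounded operator on $\ell^1(\Z)$, because $|g(k-l+\delta_k)|\le C_R(1+|k-l|)^{-\sigma}$ with $\sigma>1$.
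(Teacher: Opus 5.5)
Your proof is correct and follows essentially the same route as the paper's. Like the paper, you compute $(T_{-m}\preG[\delta]T_{m})_{kl}=g(k-l+\delta_{k+m})$, control the column part $\|(\cdot)P_N\|_{1\to1}$ through Lemma~\ref{lem:uniformcontinuity}(ii) over the finitely many $|l|\le N$, and get uniformity in $l$ for the row part $\|P_N(\cdot)\|_{1\to1}$ by bounding each of the finitely many $|k|\le N$ terms with the $\sup_{x\in\R}$ estimate of Lemma~\ref{lem:uniformcontinuity}(i).
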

Consistent with the terminology of ergodic theory, we call a sequence
satisfying \eqref{eq:c2} a recurrent sequence. 
\begin{proof}
We set $G = \preG[\delta]$.
  Choose $R>0$ such that $ (\delta_k)_{k\in\Z}  \subseteq [-R,R]$.
Set
$$
M_n:=T_{-m_n}GT_{m_n}-G .
$$
We prove that, for every fixed $N\in\mathbb N$,
$$
\lim_{n\to\infty}\|M_nP_N\|_{1\to1}=0
\qquad\text{and}\qquad
\lim_{n\to\infty}\|P_NM_n\|_{1\to1}=0.
$$
For $m\in\mathbb Z$, a direct calculation shows that the matrix entries of $T_{-m}GT_m$ are given by
$$
(T_{-m}GT_m)_{kl}=G_{k+m,l+m},\quad k,l\in\Z.
$$
Therefore, the entries of $M_n$ are
$$
(M_n)_{kl}
=
G_{k+m_n,l+m_n}-G_{kl}
=
g(k+\delta_{k+m_n}-l)-g(k+\delta_k-l).
$$
We first estimate the operator norm of 
 $M_nP_N$. 
Since the only non-zero columns of $M_nP_N$ are in the range
$|l|\leq N$, 
an application of Schur's test~\eqref{eq:op_norm_bound}
gives
\begin{equation}\label{eq:column}
    \|M_nP_N\|_{1\to1}
=
\max_{|l|\leq N}
\sum_{k\in\mathbb Z}
\left|
g(k+\delta_{k+m_n}-l)-g(k+\delta_k-l)
\right|.
\end{equation}
Fix $l\in\mathbb Z$. Since $\delta_k\in  [-R,R]$ for all
$k$, and
$
\delta_{k+m_n}\to\delta_k$ as $n\to\infty$
for every fixed $k\in\mathbb Z$, Lemma~\ref{lem:uniformcontinuity}
\textup{(ii)}, applied with $u_n(k)=\delta_{k+m_n}$  and
$u(k)=\delta_k$, gives
$$
\lim_{n\to\infty}
\sum_{k\in\mathbb Z}
\left|
g(k-l+\delta_{k+m_n})-g(k-l+\delta_k)
\right|
=0.
$$
Since the maximum in \eqref{eq:column} is taken over the fixed finite set $\{-N,\dots, N\}$, we obtain
$$
\lim_{n\to\infty}\|M_nP_N\|_{1\to1}=0.
$$

We now estimate
$P_NM_n$. 
Since the only non-zero rows of $P_NM_n$ are in the range
$|k|\leq N$, 
an application of 
\eqref{eq:op_norm_bound}
implies that
\begin{equation}\label{eq:PNMn}
\begin{split}
    \|P_N M_n\|_{1\to1}
&=
\sup_{l\in\mathbb Z}
\sum_{|k|\leq N}
\left|
g(k+\delta_{k+m_n}-l)-g(k+\delta_k-l)
\right| \\
&\leq \sum _{|k|\leq N}  \sup_{x\in\mathbb R}
\left|
g(x+\delta_{k+m_n})-g(x+\delta_k)
\right|. 
\end{split}
\end{equation}
Since $N$ is fixed
and $
\delta_{k+m_n}\to\delta_k
$
as $n \to \infty$, 
Lemma~\ref{lem:uniformcontinuity}
\textup{(i)}  implies that each summand 
on the right-hand side of \eqref{eq:PNMn} tends
to zero. The sum is over the fixed finite set $\{-N, \dots, N\}$, hence
$$
\lim_{n\to\infty}\|P_N M_n\|_{1\to1}=0.
$$

Thus, for every fixed $N\in\mathbb N$,
$$
\lim_{n\to\infty}
\bigl(\|P_N M_n\|_{1\to1}+\|M_nP_N\|_{1\to1}\bigr)=0.
$$
This shows that $G=\preG[\delta]$ is self-similar.
\end{proof}

We now combine the operator-theoretic results into an abstract criterion for the Gabor frame property. Its hypotheses separate the two remaining tasks:
establishing surjectivity of the pre-Gramian and constructing perturbation sequences with the required recurrence property.

\begin{theorem}\label{thm:semimain}
    Let $g:\R\to\C$ be a continuous function with polynomial decay of order $\sigma>1$, and $0<\alpha<1$.
    Assume that for all 
    $x\in\R$ the set $x+\alpha\Z$ contains a subset
    $\{k+\delta_k(x):k\in\Z\}\subseteq x+\alpha\Z$  with a bounded
    sequence $\delta (x) \in \ell ^\infty (\Z )$ such that the pre-Gramian $\preG[\delta][x]$ is surjective on $\ell^1(\Z)$, and that there exists a sequence $(m_n)_{n\in\Z} \subseteq \Z$ with $|m_n|\to\infty$ as $n\to\infty$ such that 
    \begin{equation}\label{eq:thm:semimain_inj}
        \lim\limits_{n\to\infty} \delta_{k+m_n}(x) = \delta_k(x),\quad k\in\Z.
    \end{equation}
    Then $\cG (g,\alpha,1)$ is a frame.
\end{theorem}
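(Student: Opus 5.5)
The plan is to verify, for each fixed $x\in\R$, the hypothesis of Proposition~\ref{prop:submatrix}: the pre-Gramian $G=\preG[\delta][x]$ must be boundedly invertible on $\ell^\infty(\Z)$. Since $g$ has polynomial decay of order $\sigma>1$ and $\delta(x)$ is bounded, say $|\delta_k(x)|\le R$, the matrix $G$ has polynomial off-diagonal decay. Indeed,
\begin{equation}
|G_{kl}|=|g(k-l+\delta_k)|\le C(1+|k-l+\delta_k|)^{-\sigma}\le C_R(1+|k-l|)^{-\sigma}.
\end{equation}
So all the operator-theoretic tools of Section~3 apply. By Theorem~\ref{thm:spec_inv}, it suffices to prove that $G$ is invertible on $\ell^1(\Z)$. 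Surjectivity on $\ell^1(\Z)$ is part of the hypothesis, so the whole argument reduces to showing that $G$ is injective on $\ell^1(\Z)$.

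For injectivity, apply Theorem~\ref{thm:surjectivity_implies_Fredholm} to $G$ on $\ell^1(\Z)$: $G$ is surjective with polynomial off-diagonal decay, hence Fredholm. Next, the recurrence condition \eqref{eq:thm:semimain_inj} together with boundedness of $\delta(x)$ and the continuity and decay of $g$ are exactly the hypotheses of Proposition~\ref{thm:preG_selfsim}, so $G$ is self-similar, i.e.\ $G\in\sigma_{\mathrm{op}}(G)$. Theorem~\ref{thm:selfsimilarinjective}, applied to the Fredholm operator $A=G$, says every element of $\sigma_{\mathrm{op}}(G)$ is injective, and in particular $G$ itself is injective.

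Thus $G$ is a bijective bounded operator on $\ell^1(\Z)$, and its inverse is bounded by the open mapping theorem. Theorem~\ref{thm:spec_inv} then gives invertibility on $\ell^\infty(\Z)$. Since this holds for every $x$, Proposition~\ref{prop:submatrix} (with the given $0<\alpha<1$ and polynomial decay) yields that $\cG(g,\alpha,1)$ is a frame.

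The only point needing care is matching hypotheses, as the substance sits in the earlier results. First, Proposition~\ref{prop:submatrix} requires invertibility on $\ell^\infty$, while the injectivity machinery works on $\ell^1$, and the transfer between them depends on the off-diagonal decay estimate above. Second, the sequence $(m_n)$ may depend on $x$; this is harmless because the argument is carried out for each $x$ separately.
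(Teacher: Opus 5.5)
Your proposal is correct and matches the paper's proof step for step. You obtain Fredholmness from Theorem~\ref{thm:surjectivity_implies_Fredholm}, self-similarity from Proposition~\ref{thm:preG_selfsim}, injectivity from Theorem~\ref{thm:selfsimilarinjective}, transfer to $\ell^\infty(\Z)$ via Theorem~\ref{thm:spec_inv}, and the frame property from Proposition~\ref{prop:submatrix}. You also check explicitly that boundedness of $\delta(x)$ gives $G_{\delta(x)}$ polynomial off-diagonal decay, with $C_R=C(1+R)^{\sigma}$; the paper leaves this hypothesis check implicit.
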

\begin{proof}
Let $x\in\R$.
By Theorem \ref{thm:surjectivity_implies_Fredholm}, $\preG[\delta][x]$ is Fredholm.
By Proposition \ref{thm:preG_selfsim}, $G_\delta $  is also self-similar. 
Theorem \ref{thm:selfsimilarinjective} further implies that $\preG[\delta][x]$ is  injective on $\ell^1(\Z)$, hence invertible on $\ell^1(\Z)$. 
Now Theorem \ref{thm:spec_inv} \textup{(i)} implies that it is invertible on $\ell^\infty(\Z)$. 
Finally, Proposition \ref{prop:submatrix} implies that $\cG(g,\alpha,1)$ is a Gabor frame.
\end{proof}

Theorem \ref{thm:semimain} reduces the proof of the main theorem to a
selection problem. For each coset $x+\alpha\mathbb Z$, we must choose
perturbations that lie in the interval of Proposition
\ref{prop:surjective}, i.e., avoid the zero of the Zak transform,
and, in addition,  satisfy the required recurrence
property~\eqref{eq:c2}.  The following lemma provides such an explicit
choice. 

\begin{lemma}\label{lem:good_delta_exists}
    Let $0<\alpha<1$, $x \in \R$, and $0<\varepsilon<\frac{1-\alpha}{2}$. Let $I \subset \R $ be an interval of length $1-2\varepsilon$. Then there exists an $a \in \R $ such that the bounded sequence $(\delta_k(x))_{k \in \Z}$ defined by
    $$
    \delta_k(x) = a+\alpha F\left(\tfrac{k+a-x}{\alpha}\right), \quad F(t)=\lceil t\rceil-t,
    $$
    satisfies
    \begin{enumerate}[(i)]
        \item $k+\delta_k(x)\in x+\alpha\Z$ for all $k\in\Z$,\item $\delta_k \in I$ for all $k \in \Z$,
        \item there exists a sequence  $(m_n)_{n\in\Z} \subseteq \Z$ with $|m_n|\to\infty$ as $n\to\infty$ such that 
        \begin{equation}
        \lim\limits_{n\to\infty} \delta_{k+m_n}(x)=\delta_k(x),\quad k\in\Z.
    \end{equation}
    \end{enumerate}
\end{lemma}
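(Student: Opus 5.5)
Write $I=[b,b+1-2\varepsilon]$ (closed or open does not matter much; we aim to land in the interior). The plan is to verify (i) directly from the formula, choose $a$ to secure (ii), and use an equidistribution/recurrence argument for (iii).

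\textbf{Step (i).} Put $t_k=\frac{k+a-x}{\alpha}$. Then
\[
k+\delta_k(x)=k+a+\alpha\lceil t_k\rceil-\alpha t_k=x+\alpha\lceil t_k\rceil\in x+\alpha\Z .
\]
So (i) holds for every choice of $a$.

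\textbf{Step (ii).} Since $F(t)\in[0,1)$, we have $\delta_k(x)\in[a,a+\alpha)$ for all $k$. We choose $a=b+\eta$ with a small $\eta>0$. The hypothesis $\varepsilon<\frac{1-\alpha}{2}$ gives $\alpha<1-2\varepsilon$, so we may pick $\eta$ with $0<\eta$ and $\eta+\alpha\le 1-2\varepsilon$ (for instance $\eta=\tfrac12(1-2\varepsilon-\alpha)$). Then $[a,a+\alpha)\subset I$, and in fact this interval lies in the interior apart from the right endpoint. This gives (ii).

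\textbf{Step (iii).} This is the main point. Write
\[
\delta_{k+m}(x)=a+\alpha F\bigl(t_k+\tfrac{m}{\alpha}\bigr).
\]
$F$ is $1$-periodic, and it is continuous except at the integers, where it jumps: it is left-continuous with value $0$ at integers and tends to $1$ from the right. We therefore need $|m_n|\to\infty$ such that $t_k+m_n/\alpha$ converges modulo $1$ to $t_k$ for every $k$, and in a way compatible with the discontinuity.

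\emph{Irrational $1/\alpha$.} The sequence $m/\alpha \bmod 1$ accumulates at $0$ from both sides (Kronecker/Dirichlet), so we can choose $m_n\to\infty$ with $\{m_n/\alpha\}\downarrow$ approaching $0$ from a single chosen side. The point $t_k$ need not be an integer. If $t_k\notin\Z$, continuity of $F$ at $t_k$ gives $\delta_{k+m_n}\to\delta_k$. If $t_k\in\Z$, we must approach from the left: the perturbation $-\theta_n$ with $\theta_n\downarrow0$ gives $F(t_k-\theta_n)=\theta_n\to0=F(t_k)$. So we choose $m_n/\alpha\equiv-\theta_n \pmod 1$ with $\theta_n\downarrow0$. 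This handles all $k$ simultaneously, since the shift is the same for every $k$ and $F$ is continuous from the left everywhere.

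\emph{Rational $1/\alpha=p/q$.} Take $m_n=nq$. Then $m_n/\alpha=np\in\Z$, so $\delta_{k+m_n}=\delta_k$ exactly and (iii) is trivial.

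\textbf{Main obstacle.} The only delicate point is the jump of $F$ at the integers. It is resolved by approaching from the left (using left-continuity of $t\mapsto\lceil t\rceil-t$), which is possible by density of $\{m/\alpha\}$ in the irrational case. Alternatively, one could choose $a$ generically so that no $t_k$ is an integer (countably many bad $a$) and then approach from either side.
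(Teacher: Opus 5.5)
Your proof is correct, and its skeleton matches the paper's. You get (i) by the same direct computation and (ii) from $\delta_k\in[a,a+\alpha)\subset I$. For (iii) you split into a rational case, using exact periodicity with $m_n=nq$ (the paper's $m_n=pn$ with $p,q$ swapped), and an irrational case, using density of $\{m/\alpha\}$.

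The one genuine difference is how you handle the jumps of $F$ at the integers.

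The paper chooses $a\notin x+\alpha\Z+\Z$, which is possible because this set is countable. Then no $t_k=\frac{k+a-x}{\alpha}$ is an integer, and $F$ is continuous at every $t_k$. So the paper can simply take $\{m_n/\alpha\}\to0^+$, and as a by-product $\delta_k\in(a,a+\alpha)$.

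You instead allow any $a$ with $[a,a+\alpha)\subset I$, e.g.\ the explicit $a=b+\frac12(1-2\varepsilon-\alpha)$. You then use that $t\mapsto\lceil t\rceil-t$ is left-continuous everywhere, and approach every $t_k$ from the left modulo $1$ by taking $\{m_n/\alpha\}\to1^-$.

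Your route removes the countability step and gives an explicit $a$. The cost is that the side of approach becomes essential. Your intermediate phrase ``$\{m_n/\alpha\}\downarrow$ approaching $0$'' reads as approach from the right. That would fail whenever some $t_k\in\Z$, since then $F(t_k+\theta_n)=1-\theta_n\not\to 0=F(t_k)$. The choice you must keep is the one you state afterwards: $m_n/\alpha\equiv-\theta_n \pmod 1$ with $\theta_n\downarrow0$.

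The alternative you mention at the end (generic $a$, then approach from either side) is exactly the paper's argument.
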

\begin{proof}
    The interval $I$ has length $1-2\varepsilon>\alpha$. Combining
    this with the fact that the set  $x+\alpha\Z+\Z$ is countable, it follows that there exists $a \in \R$ such that $[a,a+\alpha)\subset I$ and $a\notin x+\alpha\Z+\Z$. Fix an $a$ with these properties.

       For all $k\in\Z$,
    \begin{equation}
        k+\delta_k(x) = k+a+\alpha \left\lceil \tfrac{k+a-x}{\alpha}\right\rceil - (k+a-x) = x+ \alpha \left\lceil \tfrac{k+a-x}{\alpha}\right\rceil\in x+\alpha\Z.
    \end{equation}
    
Since
$
a\notin x+\alpha\mathbb Z+\mathbb Z,
$
we have $\frac{k+a-x}{\alpha}\notin\mathbb Z$. 
Therefore, $F(\frac{k+a-x}{\alpha})\in(0,1)$.
Hence, 
$
\delta_k(x)=a+\alpha F(\frac{k+a-x}{\alpha})\in(a,a+\alpha),
$
which implies that
$
\delta_k(x)\in (a,a+\alpha) \subseteq I.
$

Finally, we show Property (iii). We distinguish between $\alpha\notin\Q$ and $\alpha\in\Q$.
We denote with $\{ t\} = t-\lfloor t\rfloor$ the fractional part of $t\in\R$.

\textbf{Case 1: irrational density.} Assume that $0<\alpha<1$ is irrational. It follows that also $\frac1\alpha$ is irrational and hence
$$
\left\{\left\{ \tfrac{m}{\alpha} \right\} 
:m\in\Z\right\}
$$
is dense in $[0,1)$. Therefore, we may choose a strictly increasing
sequence $(m_n)_{n \in \N} \subset\mathbb N$ such that
\begin{equation}
   |\left\{\tfrac{m_n}{\alpha} \right\}|<\tfrac{1}{n}
   ,\quad n\in\N.
\end{equation}

Since $\frac{k+a-x}{\alpha}\notin\mathbb Z
$ for every $k \in \Z$ and since $F$ is $1$-periodic and continuous on the open set $\R\setminus\Z$, it follows that $F$ is continuous at each point $\frac{k+a-x}{\alpha}$. Hence, for all
$k\in\mathbb Z$ we have
\begin{align}
\delta_{k+m_n}
&=
a+\alpha F\left(\tfrac{k+a-x}{\alpha}+\tfrac{m_n}{\alpha}\right)\\
&=
a+\alpha F\left(\tfrac{k+a-x}{\alpha}+\left\{\tfrac{m_n}{\alpha} \right\}\right).
\end{align}
Since $|\left\{\tfrac{m_n}{\alpha} \right\}| \to 0$ as $n \to \infty$, it follows that for every $k \in \Z$ we have
$$
\lim_{n \to \infty} \delta_{k+m_n} = a+\alpha F\left(\tfrac{k+a-x}{\alpha}\right)
=
\delta_k .
$$

\textbf{Case 2: rational density.} 
Assume that $\alpha = \tfrac{p}{q}$ for some $p,q\in\N$. 
In  this case, we choose 
    $m_n = pn$, $n\in\N$. 
Then for all $k\in\Z$ we have 
\begin{equation}
    \delta_{k+m_n}
=
a+\alpha F\left(\tfrac{k+a-x}{\alpha}+qn\right) =
a+\alpha F\left(\tfrac{k+a-x}{\alpha}\right) = \delta_{k}.
\end{equation}
This proves the claim.
\end{proof}

Note that for rational density, the perturbation sequence constructed in
\cite{Groechenig2023} is periodic, as in Case~2 above.

We now complete the proof of Theorem~\ref{thm:main}. After reducing the lattice
to the form $(\alpha,1)$ by a dilation, Proposition~\ref{prop:surjective}
provides surjectivity of the pre-Gramian, while
Lemma~\ref{lem:good_delta_exists} supplies a perturbation sequence satisfying
the recurrence hypothesis of Theorem~\ref{thm:semimain}.

\begin{proof}[Proof of Theorem \ref{thm:main}]
We start by observing that $\cG(g,\alpha,\beta)$ is a frame if and only if $\cG(\tilde{g},\alpha \beta,1)$  is a frame, where $\tilde{g}(t)=g(\tfrac{t}{\beta})$ \cite[Prop.~9.4.4., or p.~200]{Groechenig2001}. 
Since the class of continuous and integrable totally positive functions is closed under dilations, it suffices to prove the statement that $\cG(g,\alpha,1)$ is a frame for all  $0<\alpha<1$.
To prove this, let $0<\varepsilon<\tfrac{1-\alpha}{2}$.
Let $(x_0,1/2)$ denote the unique zero of the Zak transform of $g$, cf. Proposition \ref{prop:zak-zero},
and set
\begin{equation}
    I = [x_0-1+\varepsilon, x_0-\varepsilon].
\end{equation}
For each $x\in\R$, select $\delta(x)$ as in Lemma \ref{lem:good_delta_exists}. With this selection, we verify that Theorem \ref{thm:semimain} is applicable.
Firstly, by Lemma \ref{lem:good_delta_exists} \textup{(i)},
\begin{equation}
    \{k+\delta_k(x):k\in\Z\}\subseteq x+\alpha\Z,\quad x\in\R.
\end{equation}
Secondly, by Proposition \ref{prop:surjective} and Lemma \ref{lem:good_delta_exists} \textup{(ii)}, $\preG[\delta][x]$ is surjective. 
By Lemma \ref{lem:good_delta_exists} \textup{(iii)}, the condition \eqref{eq:thm:semimain_inj} of Theorem \ref{thm:semimain} is satisfied. Therefore, all conditions in Theorem \ref{thm:semimain} are fulfilled, and $\cG(g,\alpha,1)$ is a frame.
\end{proof}

\section{Kadets-type theorems for shift-invariant spaces}
\subsection{Shift-invariant spaces and Gabor frames} \label{sec:52}
For the proof of the frame set conjecture (Theorem~\ref{thm:main}) we
constructed a special subset $\{k+\delta _k : k\in \Z \}$ of $x+\alpha
\Z $ and showed that the associated pre-Gramian matrix $G_\delta $
with entries $(G_\delta )_{kl} = g(k+\delta _k -l)$ is invertible. In
the context of shift-invariant spaces it is meaningful to understand
for  which perturbation sequences $(\delta _k)_{k\in \Z }$ the
pre-Gramian $G_\delta $ is invertible. The answer to this question
leads to a Kadets-type theorem analogous to the results for
bandlimited functions.

The  shift-invariant space $V^p(g)$, $1\leq p \leq \infty$,  generated by a function $g\in
L^p(\R)$ is defined by 
\begin{equation}
    V^p(g) = \Big\{ \sum\limits_{l\in\Z} c_l T_l g \in L^p(\R): c\in \ell^p(\Z)\Big\},
\end{equation}
endowed with the $L^p$-norm, $1\leq p\leq \infty$. We assume that the
$L^p$-norm on $V^p(g)$ is equivalent to the $\ell ^p$-norm of its
coefficients, i.e., $ A \|f\|_p \leq \|c\|_p \leq B \|f\|_p$ for all
$f\in V^p(g)$ and some constants $A,B>0$. This so-called stability of
translates is satisfied under a mild condition on $g$ and holds in
particular for totally positive generators $g\in L^1(\R
)$~\cite{GroechenigStoeckler2013}. The classical example is the Paley-Wiener space
\begin{equation}
    PW(\R) = \left\{f\in L^2(\R): \supp \hat{f}\subseteq [-\tfrac{1}{2},\tfrac{1}{2}\right\},
\end{equation}
with the equivalent description as a shift-invariant space $PW(\R) =
V^2(\tfrac{\sin(\pi t )}{\pi t})$. 

Analogous to the sampling theorem for bandlimited functions, which is
a topic in the theory of entire functions~\cite{seipBook}, one seeks
to understand and characterize discrete sets $X = \{x_k:k\in\Z\}\subseteq \R $ for
which 
there exist constants $0<A_p\leq B_p<\infty$ such that
\begin{equation}\label{eq:def:sampling_set}
  A_p \norm{f}_{L^p} \leq \Big( \sum _{x\in X} |f(x)|^p \Big)^{1/p} \leq  B_p  \norm{f}_{L^p}
,\qquad  \text{ for all } f\in V^p(g).
\end{equation}
Sampling in shift-invariant spaces is  useful variation of the
sampling in the Paley-Wiener space  and has become an important topic in signal
processing and of independent interest.

Although a sampling inequality~\eqref{eq:def:sampling_set} looks like
a different problem, the next lemma shows that it  is
closely connected to the theory of Gabor frames.
The following theorem is a consequence of \cite[Thm.~3.1, Thm.~3.3]{GroechenigEtAl2017} and \cite[Rem.~5.2]{GroechenigEtAl2015}.
\begin{lemma}\label{thm:Gabor_vs_Sampling}
Assume that $g$ is a continuous function with polynomial decay of
order $\sigma>1$ with  stable integer shifts.
Let $0<\alpha<1$. Then the following are equivalent:
\begin{enumerate}[(i)]
\item The Gabor system $\cG(g, \alpha,1)$ 
is a frame for $L^2(\R)$.
\item For all $x\in\R$, the set $x+\alpha\Z$ is a sampling set of
  $V^p(g)$ for some (hence for all) $p\in[1,\infty]$.
\end{enumerate}
\end{lemma}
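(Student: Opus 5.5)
The plan is to pass through the Zak transform (fiberization) characterization of Gabor frames on the lattice $\alpha\Z\times\Z$, combined with the standard reduction of sampling in $V^p(g)$ to a bi-infinite matrix problem. The paper cites \cite{GroechenigEtAl2017,GroechenigEtAl2015}, and we would follow that route.

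\textbf{Step 1: sampling as a matrix lower bound.} For $f=\sum_l c_l T_l g\in V^p(g)$ we have $f(x+\alpha j)=\sum_l g(x+\alpha j-l)c_l$. Stability of integer translates gives $\|f\|_p\asymp\|c\|_p$. Hence $x+\alpha\Z$ is a sampling set for $V^p(g)$ if and only if the matrix
\[
A_x=(g(x+\alpha j-l))_{j,l\in\Z}
\]
is bounded below on $\ell^p(\Z)$. Boundedness of $A_x$ on every $\ell^p$ follows from the polynomial decay of order $\sigma>1$ by Schur's test.

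\textbf{Step 2: Gabor frame as uniform lower bounds in $x$.} For (i)$\Leftrightarrow$(ii) with $p=2$, we use the Ron--Shen/Zak characterization. $\cG(g,\alpha,1)$ is a frame if and only if the pre-Gramian fibers $P(x)=(g(x+\alpha j-l))_{j,l}$ satisfy
\[
A\|c\|_2^2\le\|P(x)c\|_2^2\le B\|c\|_2^2
\]
for almost every $x$, with uniform constants. Equivalently, we can use the duality between $\cG(g,\alpha,1)$ and $\cG(g,1,1/\alpha)$ (Wexler--Raz/Ron--Shen): applying the Fourier series in the modulation variable, $\sum_{k,l}|\langle f,M_lT_{\alpha k}g\rangle|^2=\sum_k\int_0^1|\sum_n f(t+n)\overline{g(t+n-\alpha k)}|^2dt$. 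This identifies the frame operator with the matrix family $A_x^*$ acting on the sequences $(f(t+n))_n$.

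Continuity of $g$ and the decay make $x\mapsto A_x$ norm continuous, by the argument of Lemma~\ref{lem:uniformcontinuity}(iii) with constant shifts. Together with $A_{x+\alpha}=T A_x$ (a row shift), this upgrades ``almost every $x$'' to ``every $x$''. Hence (i) is equivalent to: every $A_x$ is bounded below on $\ell^2$ with uniform constants.

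\textbf{Step 3: uniformity and independence of $p$ (the main obstacle).} Two points remain.

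First, we must remove the uniformity requirement: a lower bound for each $x$ separately should imply a uniform lower bound. Here we plan to use compactness of the parameter range $x\in[0,\alpha]$, via periodicity, together with norm continuity of $x\mapsto A_x$, since lower bounds are stable under small perturbations.

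Second, we must transfer the lower bound between different $p$. This is the genuinely hard part, because spectral invariance (Theorem~\ref{thm:spec_inv}) concerns invertibility, not one-sided bounds. We would invoke the Sjöstrand/Shin--Sun-type result that ``bounded below'' for matrices with polynomial off-diagonal decay is independent of $p$. The standard proof uses the Wiener-type lemma for left-invertibility: $A_x^*A_x$ is invertible on $\ell^2$, has polynomial off-diagonal decay, hence by Theorem~\ref{thm:spec_inv} is invertible on every $\ell^p$. Then $(A_x^*A_x)^{-1}A_x^*$ is a left inverse on $\ell^p$, which yields the lower bound on $\ell^p$.

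The converse direction, from $\ell^p$ to $\ell^2$, is the delicate one. There we would use the limit-operator/Sjöstrand localization argument with a smooth partition of unity, as in \cite{GroechenigEtAl2015}, and for this step we would simply rely on the cited theorems.
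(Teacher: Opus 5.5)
Your proposal is correct and reconstructs essentially the argument behind the results the paper simply cites \cite{GroechenigEtAl2017,GroechenigEtAl2015}. The ingredients are the same: Ron--Shen/Zak fiberization for $p=2$; norm continuity of $x\mapsto A_x$, compactness and periodicity to pass between pointwise and uniform lower bounds; and $p$-independence of lower bounds for localized matrices. Your $A_x^*A_x$ argument with Theorem~\ref{thm:spec_inv} settles $\ell^2\Rightarrow\ell^p$, and the converse is taken from the literature, just as the paper takes the whole lemma from it.

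One small slip: the fibered analysis operator acting on $(f(t+n))_n$ is $(\overline{g(t+n-\alpha k)})_{k,n}$, i.e.\ the entrywise conjugate of $A_t$ with both indices reflected, not $A_t^*$. A lower bound for $A_t^*$ would be a surjectivity condition on $A_t$. The conclusion you draw, that (i) means $A_x$ is bounded below uniformly in $x$, is nevertheless the right one.
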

In this context, the role of the pre-Gramian $G_{\delta } =
\big(g(k+\delta _k -l)\big)_{k,l\in\Z} $ becomes evident. Let $f(x) =
\sum _{l\in\Z} c_l g(x-l) \in V^p(g)$. Then
$$
f(k+\delta _k) = \sum _{l\in\Z} c_l g(k+\delta _k - l) = (G_\delta
c)_k \, ,
$$
so that the sampled vector $y = \big(f(k+\delta _k)\big)_{k\in \Z }$ is given by $y=
G_\delta c$.
The invertibility of $G_\delta $ implies that
$$
\Big(\sum _{k\in \Z} |f(k+\delta _k)|^p \Big) ^{1/p} = \|G_\delta c \|_p \geq
A'\|c\|_p \geq A \|f\|_p
\qquad \text{ for all }  f \in V^p(g) \, ,
$$
and likewise for the upper bound. 

In addition, since $G_\delta $ is surjective, for every $y \in \ell
^p(\Z )$ there exists a (unique)  $c\in \ell ^p(\Z)$ such that $f(k+\delta _k) =
(G_\delta c )_k = y_k$. Thus this interpolation problem is always
solvable in $\ell ^p(\Z )$. In the terminology of sampling theory, the
set $\{k+\delta _k : k\in \Z \}$ is a \emph{complete interpolating set}. 

Clearly, if a subset $\{k+\delta _k : k\in \Z \}\subseteq \R $
satisfies the  sampling inequality~\eqref{eq:def:sampling_set}, then also $x+\alpha
\Z $ satisfies \eqref{eq:def:sampling_set} (with different
constants). Thus, Proposition~\ref{prop:submatrix} (invertibility of $G_\delta $)
yields a sufficient  condition for $\mathcal{G} (g, \alpha , 1)$ to be
a Gabor frame.

In the context of shift-invariant spaces it is of interest to obtain a
general 
understanding of   complete interpolating sets for $V^p(g)$. Indeed,
in ~\cite{Gro26} it was suggested that a sharp perturbation theorem in
the style of Kadets should hold for certain shift-invariant spaces.

\subsection{A Kadets-type theorem for shift-invariant spaces}
With this background we now prove Theorem~\ref{thm:kadets} for shift-invariant
spaces with totally positive generators.

\begin{theorem}\label{thm:kadets-b}
Let $g\in L^1(\mathbb R)$ be a continuous totally positive function, and let $(x_0,\frac12)\in[0,1)^2$ be the unique zero of its Zak
transform on the unit square.
If
$\delta=(\delta_k)_{k\in\mathbb Z} \subset \R$ is a sequence satisfying
$$
\delta _{\max} :=    \sup _{k\in\Z } |\delta _k-(x_0-\tfrac{1}{2})|<\frac12, 
$$
then $X = \{ k+\delta_k : k \in \Z \}$ is a complete interpolating
sequence for $V^2(g)$.
Moreover, the constant $\frac12$ is sharp.  
\end{theorem}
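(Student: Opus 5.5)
The plan is to reduce everything to invertibility of $G_\delta$ on $\ell^2(\Z)$, and to get that invertibility from a Fredholm-index homotopy argument. By Theorem~\ref{thm:spec_inv} it suffices to show that $G_\delta$ is invertible on $\ell^1(\Z)$. The discussion after Lemma~\ref{thm:Gabor_vs_Sampling} then shows this is equivalent to $X$ being a complete interpolating sequence for $V^2(g)$.

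Since $\delta_{\max}<\tfrac12$, set $\varepsilon=\tfrac12-\delta_{\max}>0$. Every $\delta_k$ then lies in $[x_0-1+\varepsilon,\,x_0-\varepsilon]$.
\begin{enumerate}[(i)]
\item Proposition~\ref{prop:surjective} gives that $G_\delta$ is surjective on $\ell^1(\Z)$, and Theorem~\ref{thm:surjectivity_implies_Fredholm} gives that it is Fredholm.
\item Consider the homotopy $\delta^{(t)}=(1-t)\delta+t(x_0-\tfrac12)$ for $t\in[0,1]$. Each $\delta^{(t)}$ stays in the same interval, because the interval is convex and contains the constant $x_0-\tfrac12$. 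So every $G_{\delta^{(t)}}$ is surjective and Fredholm.
\item Lemma~\ref{lem:uniformcontinuity}(iii) shows that $t\mapsto G_{\delta^{(t)}}$ is norm continuous. Since the Fredholm index is locally constant (Step~1 of Theorem~\ref{thm:surjectivity_implies_Fredholm}), it is constant along the path, and
\[\operatorname{ind}G_\delta=\operatorname{ind}G_{x_0-1/2}.\]
\item For constant $\delta_k\equiv c=x_0-\tfrac12$, the matrix $G_c$ is a Laurent (convolution) matrix with symbol $\sum_j g(c+j)e^{2\pi i j\xi}$, which equals $Zg(c,-\xi)$ up to conventions. By Theorem~\ref{prop:zak-zero} this symbol vanishes only when $c\equiv x_0 \pmod 1$ and $\xi=\tfrac12$. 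Since $c=x_0-\tfrac12\not\equiv x_0$, the symbol has no zeros. By Wiener's lemma $G_c$ is then invertible on $\ell^1$; this is the Janssen--Walter uniform sampling result. Hence $\operatorname{ind}G_{x_0-1/2}=0$.
\item Combining, $\operatorname{ind}G_\delta=0$. Together with surjectivity this gives $\dim\ker G_\delta=0$, so $G_\delta$ is invertible.
\end{enumerate}
This argument needs no recurrence hypothesis.

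For sharpness, I would show that for every $\eta>0$ some $\delta$ with $\delta_{\max}\le\tfrac12+\eta$ fails. The natural first choice is the constant $\delta_k\equiv x_0$, which has $\delta_{\max}=\tfrac12$ exactly. Its symbol $Zg(x_0,\cdot)$ vanishes at $\xi=\tfrac12$, so $G_{x_0}$ is not invertible on $\ell^2$. Hence $\{k+x_0\}$ is not a complete interpolating set, and the strict bound $\tfrac12$ cannot be replaced by $\le\tfrac12$.

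To show that no constant larger than $\tfrac12$ works, it suffices to note that $\tfrac12+\eta$ already admits the constant sequence $x_0$ (or $x_0-1$). For this I must check the convention: "sharp" means that the supremum condition cannot be relaxed to any larger constant $\kappa>\tfrac12$. Since the constant sequence $x_0$ satisfies $\sup_k|\delta_k-(x_0-\tfrac12)|=\tfrac12<\kappa$, it is a counterexample for every $\kappa>\tfrac12$.

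I expect the main obstacle to be the careful identification of the symbol of $G_c$ with the Zak transform, including sign and orientation conventions, so that the zero really sits at $c\equiv x_0$. A second point is the rigorous argument that a Laurent operator whose symbol has a zero is non-invertible (not bounded below) on $\ell^2$; here I would use Fourier series and the continuity of the symbol.
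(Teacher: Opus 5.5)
Your proposal is correct and follows the paper's proof essentially step for step. It uses the same straight-line homotopy to the constant sequence $x_0-\tfrac12$ (you only reverse the parametrization), surjectivity and Fredholmness along the path from Proposition~\ref{prop:surjective} and Theorem~\ref{thm:surjectivity_implies_Fredholm}, norm continuity from Lemma~\ref{lem:uniformcontinuity}(iii), index zero at the Laurent endpoint via Wiener's lemma, spectral invariance to pass to $\ell^2$, and the lattice $x_0+\Z$ for sharpness. One small fix: your $\varepsilon=\tfrac12-\delta_{\max}$ equals $\tfrac12$ when $\delta_{\max}=0$, which violates the hypothesis $\varepsilon<\tfrac12$ of Proposition~\ref{prop:surjective}, so take any $0<\varepsilon<\tfrac12-\delta_{\max}$ instead, as the paper does.
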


\begin{proof}[Proof of Theorem \ref{thm:kadets}]
As  argued above, the statement of the theorem is equivalent to the
invertibility of  the matrix
\begin{equation}
        \preG[\delta] = (g(k+\delta_k-l))_{k,l\in\Z}
    \end{equation}
    on some  $\ell^p(\Z)$. To achieve this goal, we construct a
    continuous path from $G_\delta $ to an invertible operator and use the
    stability of the Fredholm index. The definition of such a path is inspired by Katsnelson's proof of Kadets's
theorem \cite{Kac71} and Avdonin's work  \cite{avdonin}. 

    \textbf{Step 1. Construction of an invertible operator near
      $G_\delta $.} Let $\varepsilon < 1/2 - \delta _{\max }$ and  set
    $$
    I=[x_0-1+\varepsilon,x_0-\varepsilon].
$$
Then  by assumption  $\delta_k\in I$ for all
$k\in\Z$.
Set $
\delta _* =  x_0  - 1/2$. Then $\delta _* \in I $.

Now we consider the pre-Gramian
operator $G_{\delta _*}$ based on the constant sequence $( \dots,
\delta _*, \delta _*, \dots )$.
We have seen above that $G_{\delta _*}$ is invertible if and only if
the sequence $\delta _* + \Z $ is a complete interpolating set. By
results of Janssen and Walter~~\cite{janssen94a,walter94} this is the
case if and only if the Zak transform of $g$ satisfies $Zg(\delta _*,
\xi ) \neq 0$ for all $\xi \in [0,1]$. Since $\delta _*\in I$ and
the only zeros of the Zak transform are of the form $(x_0 + k,1/2)$, this condition is
satisfied, and consequently $G_{\delta _*}$ is invertible on all $\ell
^p(\Z), 1\leq p \leq \infty $.

We note that the underlying argument is elementary: $G_{\delta _*}$
acts as the convolution operator  
$$
    (G_{\delta_*}c)_k
    =
    \sum_{l\in\mathbb Z}g(k+\delta_*-l)c_l 
    $$
with     Fourier multiplier
$
\sum _{k\in \Z } g(k+\delta _*) e^{2\pi i k\xi } =     Zg(\delta _*,
-\xi )  \neq 0 \text{ for all } \xi \in [0,1] 
$.
 Wiener's  Lemma for absolutely convergent
Fourier series then asserts that the inverse $Zg(\delta _*, \xi
)^{-1}$ has again an absolutely convergent
Fourier series, which implies that the   operator $G_{\delta
  _*}^{-1}$ is invertible on all $\ell ^p(\Z)$.

\textbf{Step 2. An intermediate family of matrices.} 
Let
$$
    \delta_k(t)=(1-t)\delta_*+t\delta_k,\qquad k\in\mathbb Z \, ,
$$
and $\delta (t) = (\delta _k(t))_{k\in \Z }$ for  $0\leq
t\leq1$. Since $I$ is convex, every sequence $\delta(t) = 
(\delta _k(t))_{k\in \Z}$ still takes all   values
in $I$.

Consequently, Proposition~\ref{prop:surjective} implies that $G_{\delta
  (t)}$ is
surjective on $\ell^1(\mathbb Z)$ for all $t\in [0,1]$. Furthermore, 
Theorem~\ref{thm:surjectivity_implies_Fredholm} implies that
$G_{\delta (t)}$ is
Fredholm on $\ell^1(\mathbb Z)$ for all $t\in [0,1]$.

 We next use 
Lemma~\ref{lem:uniformcontinuity}(iii) and see that  the path
$t\mapsto G_{\delta(t)}$ is norm-continuous as a path in
$\mathcal L(\ell^1(\mathbb Z))$. Now, the stability theorem for
Fredholm operators implies  that the Fredholm index 
$\mathrm{ind}(A)=\dim\ker A - \dim \operatorname{coker} A $ 
is constant along
this path  \cite[Thm.~5.17]{kato1980perturbation}; in particular, 
\begin{equation}\label{eq:const_ind}
    \mathrm{ind}(G_{\delta(t)}) = \mathrm{ind}(G_{\delta(0)}) = \mathrm{ind}(G_{\delta_*}), \qquad
    \text{ for all }  0\leq t\leq 1.
  \end{equation}

   To see this in detail, we observe that for every $t\in [0,1]$, there
  is an open interval $I_t$ containing $t$ such that $
  \mathrm{ind}(G_{\delta(u)})= \mathrm{ind}(G_{t})$ for $u\in
  I_t$. Since $[0,1]$ can be covered with finitely many overlapping
  intervals on which the Fredholm index is constant, 
 we find that
  $$
  \mathrm{ind}(G_{\delta }) =    \mathrm{ind}(G_{\delta (1)}) =   \mathrm{ind}(G_{\delta_*}) 
 \, .
  $$

\textbf{Step 3. Invertibility of $G_\delta $.} As we have seen in Step~2, at $t=0$, the operator $G_{\delta(0)} =
G_{\delta_*}$ is invertible and has therefore Fredholm index 
$\operatorname{ind}(G_{\delta_*}) = 0$. 

Consequently,  $G_\delta = G_{\delta(1)}$ also has Fredholm index
$$
    \operatorname{ind}(G_\delta)=0 .
$$
Since $G_\delta$ is surjective, its cokernel is zero. Thus, $\dim\ker G_\delta = \operatorname{ind}(G_\delta) = 0$ forces
$$
    \ker G_\delta=\{0\}.
$$
Hence, $G_\delta$ is invertible on $\ell^1(\mathbb Z)$. By
Theorem~\ref{thm:spec_inv}, it is invertible on all $\ell^p(\mathbb
Z)$. This means that the corresponding perturbation sequence
$\{k+\delta _k: k\in \Z\}$ is a complete interpolating set for
$V^p(g)$, $1\leq p\leq \infty $. 

\textbf{Step 4. Optimality.}
It remains to prove sharpness. If the perturbation is the constant
sequence  $\eta ,   \eta _k =  x_0$,   
then the   convolution operator 
$$
 G_\eta  c = \sum _{l\in \Z}   g(k+x_0-l) c_l
$$
has the  Fourier multiplier 
$
    \xi\mapsto Zg(x_0,-\xi).
$
Since this multiplier vanishes at $\xi=\frac12$ because
$Zg(x_0,\frac12)=0$, it is not invertible, and 
 $X= x_0+\Z$ is not a complete interpolating set.
\end{proof}

\begin{remark} {\rm 
  (i) For the Paley-Wiener space $PW(\R)$ the maximum perturbation is
  $\delta_{\max} = \tfrac{1}{4}$ by Kadets' theorem~\cite{Kadec1964}. 
Interestingly enough, $\delta_{\max} = \tfrac{1}{4}$ also holds for
the one-sided exponential  \cite{BelovEtAl2022}, which is the only
discontinuous integrable totally positive function. By contrast,  for the
shift-invariant spaces generated by an integrable \emph{continuous}
totally positive functions the maximum perturbation is  $\delta_{\max} = \tfrac{1}{2}$.  

(ii) Theorem~\ref{thm:kadets} could also be proved with limit-operator
techniques. Our approach using Theorem~\ref{thm:surjectivity_implies_Fredholm} and the stability of the
Fredholm index leads to  a slightly simpler proof.

(iii) Note that Theorem~\ref{thm:kadets} implies
Theorem~\ref{thm:main}. The particular subsequence  $\{k + \delta _k:
k\in \Z\}$ extracted from $x+\alpha \Z $ satisfies the conditions of
Theorem~\ref{thm:kadets} and therefore the associated pre-Gramian
matrix $G_\delta $ is invertible. }
\end{remark}

\section{Appendix: Lean formalization}

This appendix describes the Lean formalization of Theorem~\ref{thm:main}. It builds on Mathlib \cite{mathlib}, the mathematical library of Lean.

Theorem~\ref{thm:main} identifies the frame set of totally positive functions by establishing an equality of sets. The Lean formalization verifies the nontrivial inclusion in this equality, namely the following statement.

\begin{theorem*}
Let $g\colon\mathbb R\to\mathbb R$ be totally positive, integrable, and continuous.  If $\alpha,\beta>0$ and $\alpha\beta<1$, then the Gabor system
$$
  \bigl\{e^{2\pi i\beta n x}g(x-\alpha m):m,n\in\mathbb Z\bigr\}
$$
is a frame for $L^2(\mathbb R)$.
\end{theorem*}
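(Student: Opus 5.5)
This statement is exactly the nontrivial inclusion $\mathcal M\subseteq\ft(g)$ of Theorem~\ref{thm:main}. The plan is to follow the chain of results in Section~4 verbatim, so that each link corresponds to a separate formal lemma.

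First, reduce to $\beta=1$ by dilation. Set $\tilde g(t)=g(t/\beta)$. Then $\tilde g$ is again continuous, integrable and totally positive, since $\tilde g(x_j-y_k)=g(x_j/\beta-y_k/\beta)$ and dilation by $1/\beta>0$ preserves the ordering of the points. The system $\mathcal G(g,\alpha,\beta)$ is a frame if and only if $\mathcal G(\tilde g,\alpha\beta,1)$ is one, because the unitary dilation maps one system onto the other. So we may assume $\beta=1$ and $0<\alpha<1$.

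Next, fix $0<\varepsilon<(1-\alpha)/2$. Let $(x_0,\tfrac12)$ be the Zak zero from Theorem~\ref{prop:zak-zero}, and put $I=[x_0-1+\varepsilon,x_0-\varepsilon]$. For each $x\in\R$, Lemma~\ref{lem:good_delta_exists} produces $\delta(x)$ with three properties:
\begin{itemize}
\item $k+\delta_k(x)\in x+\alpha\Z$ for all $k$;
\item $\delta_k(x)\in I$ for all $k$;
\item $\delta(x)$ is recurrent along some sequence $m_n$ with $|m_n|\to\infty$.
\end{itemize}
Proposition~\ref{prop:surjective} then gives surjectivity of $G_{\delta(x)}$ on $\ell^1(\Z)$.

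Third, verify the decay hypothesis needed by Theorem~\ref{thm:semimain}, namely polynomial decay of order $\sigma>1$. This follows from Schoenberg's exponential decay of integrable totally positive functions. Because $\delta(x)$ is bounded, the matrix $G_{\delta(x)}$ then has polynomial off-diagonal decay. The argument now runs as follows:
\begin{itemize}
\item Theorem~\ref{thm:surjectivity_implies_Fredholm} makes $G_{\delta(x)}$ Fredholm.
\item Proposition~\ref{thm:preG_selfsim} makes it self-similar.
\item Theorem~\ref{thm:selfsimilarinjective} makes it injective, hence invertible on $\ell^1(\Z)$.
\item Theorem~\ref{thm:spec_inv} transfers invertibility to $\ell^\infty(\Z)$.
\item Proposition~\ref{prop:submatrix} concludes that $\mathcal G(g,\alpha,1)$ is a frame.
\end{itemize}

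I expect the main obstacle to be the imported analytic inputs, not the operator theory of Section~3, which is elementary and self-contained. These inputs are Theorem~\ref{prop:zak-zero}, Proposition~\ref{prop:surjective} (via the de Boor--Friedland--Pinkus criterion), Proposition~\ref{prop:submatrix}, the spectral invariance theorem, and Kato's lower estimate for Fredholm operators. A realistic plan is to state these as hypotheses or axioms, with exact matching of conventions, and to prove the rest in full. That includes the exponential decay of $g$ and the uniform continuity argument of Lemma~\ref{lem:uniformcontinuity}.

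The remaining delicate points are two facts used in Lemma~\ref{lem:good_delta_exists}:
\begin{itemize}
\item density of $\{m/\alpha\}$ for irrational $\alpha$;
\item existence of $a\notin x+\alpha\Z+\Z$, a countability argument, with $[a,a+\alpha)\subset I$.
\end{itemize}
Both are routine but fiddly to state precisely.
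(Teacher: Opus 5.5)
Your proposal is correct and follows the paper's proof essentially verbatim. You reduce by dilation to $\cG(\tilde g,\alpha\beta,1)$, choose a recurrent $\delta(x)$ with values in $I=[x_0-1+\varepsilon,x_0-\varepsilon]$ via Lemma~\ref{lem:good_delta_exists}, and obtain surjectivity from Proposition~\ref{prop:surjective}; you then run exactly the chain of Theorem~\ref{thm:semimain} (Theorems~\ref{thm:surjectivity_implies_Fredholm}, \ref{thm:selfsimilarinjective}, \ref{thm:spec_inv} together with Propositions~\ref{thm:preG_selfsim} and~\ref{prop:submatrix}), and your remark that polynomial decay follows from Schoenberg's exponential decay is also consistent with the paper.
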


The converse inclusion, namely the necessity of the condition $\alpha\beta<1$ for the frame property, is not part of the Lean formalization, since it is a well-established fact in the literature. Consequently, the formalization covers precisely the direction of Theorem~\ref{thm:main} that is new.

\subsection{Formalization of the main result}

Only a small amount of Lean syntax is needed to read the formalization of the above theorem. A general definition in Lean is introduced with \lean{def}, while a proved statement is introduced with \lean{lemma} or \lean{theorem}. In a statement, the hypotheses are listed after the name of the theorem, and each hypothesis is given a name by which it can be referred to. The nontrivial direction of the main theorem of the present paper is stated in Lean as follows.

\begin{leancode}
theorem TPFrameSet
    {α β : ℝ} (hyp1 : 0 < α) (hyp2 : 0 < β) (hyp3 : α * β < 1)
    (g : ℝ → ℝ) (hyp4 : IsTPIntegrableContinuous g) :
    IsGaborFrame g hyp4 α β
\end{leancode}

The parameters $\alpha$ and $\beta$ are declared as real numbers by \lean{{α β : ℝ}}; the curly braces mark them as implicit arguments, whose values Lean infers automatically whenever the theorem is applied. The assumptions \lean{hyp1}, \lean{hyp2} and \lean{hyp3} correspond to the hypotheses $\alpha>0$, $\beta>0$, and $\alpha\beta<1$, respectively. The function $g$ is declared by \lean{(g : ℝ → ℝ)}, and the hypothesis \lean{hyp4} combines the properties of $g$ being totally positive, integrable, and continuous. Finally, the expression after the last colon is the theorem's conclusion, namely that the Gabor system generated by $g$ with lattice parameters $\alpha$ and $\beta$ is a frame: \lean{IsGaborFrame g hyp4 α β}. The two predicates \lean{IsTPIntegrableContinuous} and \lean{IsGaborFrame} are defined in the following subsections.

\subsection{The hypotheses on the window}

The notion of total positivity is formalized directly from its definition in terms of determinants.

\begin{leancode}
def IsTP (g : ℝ → ℝ) : Prop :=
  g ≠ 0 ∧
  ∀ n : ℕ, ∀ a b : Fin n → ℝ,
    StrictMono a → StrictMono b →
    0 ≤ Matrix.det (Matrix.of fun i j => g (a i - b j))
\end{leancode}

The type \lean{Fin n} consists of the indices $0,\ldots,n-1$, so the functions
\lean{a b : Fin n → ℝ} represent two real $n$-tuples $(a_j)_{j=0}^{n-1}$ and $(b_j)_{j=0}^{n-1}$. The assumptions \lean{StrictMono a} and \lean{StrictMono b} require these tuples to be strictly increasing. In the final line, \lean{Matrix.of} interprets the function $(i,j) \mapsto g(a_i - b_j)$ as an $n \times n$ matrix, so the condition states that the determinant of the matrix $(g(a_i-b_j))_{i,j=0}^{n-1}$ is nonnegative. The classical definition moreover requires that $g$ is not constant. In the Lean definition, this appears as the condition \lean{g ≠ 0}. For the windows considered here the two requirements agree: the only integrable constant function is the zero function.

The total positivity of $g$ and the two remaining analytic assumptions are combined into a single definition.

\begin{leancode}
def IsTPIntegrableContinuous (g : ℝ → ℝ) : Prop :=
  IsTP g ∧ Integrable g ∧ Continuous g
\end{leancode}

Here \lean{Integrable g} denotes integrability with respect to the Lebesgue measure on $\R$. Thus, the hypothesis \lean{IsTPIntegrableContinuous g} asserts exactly that
$g$ is totally positive, integrable and continuous.

\subsection{Gabor atoms}

For a function $g : \R \to \R$, the Gabor atom $e^{2\pi i \beta n x} g(x-\alpha m)$ with $\alpha,\beta \in \R$ and $m,n \in \Z$ is first defined as an ordinary complex-valued function on $\R$.

\begin{leancode}
def GaborAtom_Function
    (g : ℝ → ℝ) (α β : ℝ) (m n : ℤ) : ℝ → ℂ :=
    fun x =>
    Complex.exp (2 * Real.pi * Complex.I * β * n * x) * g (x - α * m)
\end{leancode}

To use this function in the Hilbert space $L^2(\mathbb{R})$, one must first verify that it is square-integrable. This is established by the following lemma, in which \lean{volume} denotes the Lebesgue measure on $\R$.

\begin{leancode}
lemma gaborAtom_memL2
    (g : ℝ → ℝ) (hg : IsTPIntegrableContinuous g)
    (α β : ℝ) (m n : ℤ) :
    MemLp (GaborAtom_Function g α β m n) 2 volume
\end{leancode}

The predicate \lean{MemLp} expresses membership in an $L^p$-space. The conclusion of the lemma thus states that the Gabor atom is measurable and square-integrable with respect to the Lebesgue measure. The hypothesis \lean{hg} is needed here: continuity and integrability alone do not imply square-integrability. For the windows under consideration, however, the exponential decay of integrable totally positive functions implies square-integrability. The lemma is then used to construct the corresponding element of $L^2(\mathbb{R})$.

\begin{leancode}
def GaborAtom
    (g : ℝ → ℝ) (hg : IsTPIntegrableContinuous g)
    (α β : ℝ) (m n : ℤ) :
    Lp ℂ 2 (volume : Measure ℝ) :=
    (gaborAtom_memL2 g hg α β m n).toLp
\end{leancode}

The type \lean{Lp ℂ 2 volume} is Lean's version of the Hilbert space $L^2(\R) = L^2(\mathbb{R},\mathbb{C})$. The operation \lean{toLp} converts the ordinary
function, together with its proof of square-integrability, into the element of $L^2(\R)$ that it represents.

\subsection{Gabor frames}

The Gabor frame property of a totally positive, integrable, continuous function is formalized by the usual two-sided frame inequality.

\begin{leancode}
def IsGaborFrame
    (g : ℝ → ℝ) (hyp : IsTPIntegrableContinuous g)
    (α β : ℝ) : Prop :=
    ∃ A B : ℝ, 0 < A ∧ A ≤ B ∧ ∀ f : Lp ℂ 2 volume,
    let E :=
      ∑' (m : ℤ) (n : ℤ),
        ‖⟪f, GaborAtom g hyp α β m n⟫_ℂ‖ ^ 2
    A * ‖f‖ ^ 2 ≤ E ∧ E ≤ B * ‖f‖ ^ 2
\end{leancode}

The definition states the existence of frame bounds $A,B \in \R$ with $0 < A \leq B$ such that the frame inequalities hold for every $f$ in $L^2(\mathbb{R})$. Note that the predicate takes the proof \lean{hyp} as an argument, since this proof is required to form the Gabor atoms as elements of $L^2(\mathbb{R})$. The keyword \lean{let} introduces the local abbreviation \lean{E} for the double sum, in which \lean{∑'} denotes Mathlib's infinite sum and \lean{⟪f, h⟫_ℂ} denotes
the complex inner product of $f$ and $h$. In the definition above, $h$ is
the $(m,n)$-th Gabor atom given by \lean{GaborAtom g hyp α β m n}.

Consequently, the final line expresses precisely the two-sided inequality
$$
  A\| f\|^2
  \leq \sum_{m,n\in\mathbb Z}
       \bigl|\langle f,M_{\beta n}T_{\alpha m}g\rangle\bigr|^2
  \leq B\| f\|^2,
$$
where $M_{\beta n}T_{\alpha m}g(x) = e^{2\pi i \beta n x}g(x-\alpha m)$ denotes the time-frequency shifts of $g$.

\subsection{Source code}

The complete source code of the Lean formalization is available at
\begin{center}
\url{https://github.com/lukasliehr/TotallyPositive}.
\end{center}
The repository separates the statements from the proofs, so that the formalized claims can be inspected independently. The file \lean{Showcase.lean} contains the definitions and statements described above, with the two proofs, of \lean{gaborAtom_memL2} and of \lean{TPFrameSet}, replaced by the placeholder \lean{sorry}. Verifying that the formalization faithfully captures the theorem stated at the beginning of this appendix therefore only requires reading this file. The companion file \lean{Showcase_WithProofs.lean} contains the same definitions and statements, but each placeholder is replaced by a fully verified proof, whose correctness is certified by the Lean kernel during compilation.

\section{Acknowledgements}
I.~S.~was funded in part or in whole by the Austrian Science Fund (FWF)  [\href{https://doi.org/10.55776/Y1199}{10.55776/Y1199}]. For open access purposes, the authors have applied a CC BY public
copyright license to any author accepted manuscript version arising from this submission.
L.~L.~is grateful to the Azrieli Foundation for the award of an Azrieli Fellowship and acknowledges the support of this research by ISF Grant No.~854/25.

\bibliographystyle{plain}
\bibliography{bibfile}

\begin{thebibliography}{10}

\bibitem{avdonin}
S.~A. Avdonin.
\newblock On the question of {R}iesz bases of exponential functions in {$L\sp{2}$}.
\newblock {\em Vestnik Leningrad. Univ. Mat. Meh. Astronom.}, 13:5--12, 1974.

\bibitem{bannert2013discretized}
Severin Bannert, Karlheinz Gr\"ochenig, and Joachim St\"ockler.
\newblock Discretized {G}abor frames of totally positive functions.
\newblock {\em IEEE Trans. Inform. Theory}, 60(1):159--169, 2014.

\bibitem{BBG22}
Anton Baranov, Yurii Belov, and Karlheinz Gr\"{o}chenig.
\newblock Complete interpolating sequences for the {G}aussian shift-invariant space.
\newblock {\em Appl. Comput. Harmon. Anal.}, 61:191--201, 2022.

\bibitem{BelovEtAl2022}
Yurii Belov, Aleksei Kulikov, and Yurii Lyubarskii.
\newblock Irregular {G}abor frames of {C}auchy kernels.
\newblock {\em Appl. Comput. Harmon. Anal.}, 57:101--104, 2022.

\bibitem{belov2023gabor}
Yurii Belov, Aleksei Kulikov, and Yurii Lyubarskii.
\newblock Gabor frames for rational functions.
\newblock {\em Invent. Math.}, 231(2):431--466, 2023.

\bibitem{daubechies2002wavelet}
Ingrid Daubechies.
\newblock The wavelet transform, time-frequency localization and signal analysis.
\newblock {\em IEEE Trans. Inform. Theory}, 36(5):961--1005, 1990.

\bibitem{deBoorFriedlandPinkus1982}
C.~de~Boor, S.~Friedland, and A.~Pinkus.
\newblock Inverses of infinite sign regular matrices.
\newblock {\em Trans. Amer. Math. Soc.}, 274(1):59--68, 1982.

\bibitem{gabor}
D.~Gabor.
\newblock Theory of communication.
\newblock {\em J. IEE (London)}, 93(III):429--457, 1946.

\bibitem{ghosh2025gabor}
Riya Ghosh and A.~Antony~Selvan.
\newblock On {G}abor frames generated by {B}-splines, totally positive functions, and {H}ermite functions.
\newblock {\em Appl. Numer. Math.}, 207:1--23, 2025.

\bibitem{Groechenig2001}
Karlheinz Gr\"{o}chenig.
\newblock {\em Foundations of time-frequency analysis}.
\newblock Applied and Numerical Harmonic Analysis. Birkh\"{a}user Boston, Inc., Boston, MA, 2001.

\bibitem{mystery}
Karlheinz Gr{\"o}chenig.
\newblock The mystery of {G}abor frames.
\newblock {\em J. Fourier Anal. Appl.}, 20(4):865--895, 2014.

\bibitem{grosurvey22}
Karlheinz Gr\"ochenig.
\newblock Totally positive functions in sampling theory and time-frequency analysis.
\newblock In {\em Mathematical analysis, its applications and computation}, volume 385 of {\em Springer Proc. Math. Stat.}, pages 51--73. Springer, Cham, [2022] \copyright 2022.

\bibitem{Groechenig2023}
Karlheinz Gr\"{o}chenig.
\newblock Totally positive functions and {G}abor frames over rational lattices.
\newblock {\em Adv. Math.}, 427:Paper No. 109113, 12, 2023.

\bibitem{Gro26}
Karlheinz Gr\"ochenig.
\newblock Infinite totally positive matrices --- some open problems.
\newblock {\em Acta Sci. Math.}, to appear, 2026.

\bibitem{grochenig2016completeness}
Karlheinz Gr\"ochenig, Antti Haimi, and Jos\'e{}~Luis Romero.
\newblock Completeness of {G}abor systems.
\newblock {\em J. Approx. Theory}, 207:283--300, 2016.

\bibitem{GroechenigEtAl2015}
Karlheinz Gr\"ochenig, Joaquim Ortega-Cerd\`a, and Jos\'e{}~Luis Romero.
\newblock Deformation of {G}abor systems.
\newblock {\em Adv. Math.}, 277:388--425, 2015.

\bibitem{grochenig2018sampling}
Karlheinz Gr\"ochenig, Jos\'e{}~Luis Romero, and Joachim St\"ockler.
\newblock Sampling theorems for shift-invariant spaces, {G}abor frames, and totally positive functions.
\newblock {\em Invent. Math.}, 211(3):1119--1148, 2018.

\bibitem{GroechenigEtAl2017}
Karlheinz Gr\"ochenig, Jos\'e{}~Luis Romero, and Joachim St\"ockler.
\newblock Sampling theorems for shift-invariant spaces, {G}abor frames, and totally positive functions.
\newblock {\em Invent. Math.}, 211(3):1119--1148, 2018.

\bibitem{GroechenigStoeckler2013}
Karlheinz Gr\"ochenig and Joachim St\"ockler.
\newblock Gabor frames and totally positive functions.
\newblock {\em Duke Math. J.}, 162(6):1003--1031, 2013.

\bibitem{heil2007history}
Christopher Heil.
\newblock History and evolution of the density theorem for {G}abor frames.
\newblock {\em J. Fourier Anal. Appl.}, 13(2):113--166, 2007.

\bibitem{janssen2002some}
A.~J. E.~M. Janssen.
\newblock Some counterexamples in the theory of {W}eyl-{H}eisenberg frames.
\newblock {\em IEEE Trans. Inform. Theory}, 42(2):621--623, 1996.

\bibitem{Janssen1996}
A.~J. E.~M. Janssen.
\newblock Some {W}eyl-{H}eisenberg frame bound calculations.
\newblock {\em Indag. Math. (N.S.)}, 7(2):165--183, 1996.

\bibitem{janssen2002hyperbolic}
A.~J. E.~M. Janssen and Thomas Strohmer.
\newblock Hyperbolic secants yield {G}abor frames.
\newblock {\em Appl. Comput. Harmon. Anal.}, 12(2):259--267, 2002.

\bibitem{janssen94a}
A.J.E.M. Janssen.
\newblock The {Z}ak transform and sampling theorems for wavelet subspaces.
\newblock {\em IEEE Transactions on Signal Processing}, 41(12):3360--3364, 1993.

\bibitem{Kac71}
V.~\`E. Kacnel'son.
\newblock Bases of exponential functions in {$L\sp{2}$}.
\newblock {\em Funkcional. Anal. i Prilo\v zen.}, 5(1):37--47, 1971.

\bibitem{Kadec1964}
M.~I. Kadec.
\newblock The exact value of the {P}aley-{W}iener constant.
\newblock {\em Dokl. Akad. Nauk SSSR}, 155:1253--1254, 1964.

\bibitem{karlin1968total}
Samuel Karlin.
\newblock {\em Total positivity. {V}ol. {I}}.
\newblock Stanford University Press, Stanford, CA, 1968.

\bibitem{kato1980perturbation}
Tosio Kato.
\newblock {\em Perturbation theory for linear operators}.
\newblock Classics in Mathematics. Springer-Verlag, Berlin, 1995.
\newblock Reprint of the 1980 edition.

\bibitem{Klo15}
Tobias Kloos.
\newblock Zeros of the {Z}ak transform of totally positive functions.
\newblock {\em J. Fourier Anal. Appl.}, 21(5):1130--1145, 2015.

\bibitem{KS14}
Tobias Kloos and Joachim St\"{o}ckler.
\newblock {Z}ak transforms and {G}abor frames of totally positive functions and exponential {B}-splines.
\newblock {\em J. Approx. Theory}, 184:209--237, 2014.

\bibitem{kloos2016implementation}
Tobias Kloos, Joachim St\"ockler, and Karlheinz Gr\"ochenig.
\newblock Implementation of discretized {G}abor frames and their duals.
\newblock {\em IEEE Trans. Inform. Theory}, 62(5):2759--2771, 2016.

\bibitem{lindner2006infinite}
Marko Lindner.
\newblock {\em Infinite matrices and their finite sections}.
\newblock Frontiers in Mathematics. Birkh\"auser Verlag, Basel, 2006.
\newblock An introduction to the limit operator method.

\bibitem{lindner2014affirmative}
Marko Lindner and Markus Seidel.
\newblock An affirmative answer to a core issue on limit operators.
\newblock {\em J. Funct. Anal.}, 267(3):901--917, 2014.

\bibitem{lyubarskiiframes}
Yurii Lyubarskii.
\newblock Frames in the {B}argmann space of entire functions.
\newblock In {\em Entire and subharmonic functions}, volume~11 of {\em Adv. Soviet Math.}, pages 167--180. Amer. Math. Soc., Providence, RI, 1992.

\bibitem{neumann}
J.~von Neumann.
\newblock {\em Mathematische {G}rundlagen der {Q}uantenmechanik}.
\newblock Springer, Berlin, 1932.
\newblock English translation: ``Mathematical foundations of quantum mechanics,'' Princeton Univ. Press, 1955.

\bibitem{pinkus2010totally}
Allan Pinkus.
\newblock {\em Totally positive matrices}, volume 181 of {\em Cambridge Tracts in Mathematics}.
\newblock Cambridge University Press, Cambridge, 2010.

\bibitem{rabinovitch2004limit}
Vladimir Rabinovich, Steffen Roch, and Bernd Silbermann.
\newblock {\em Limit operators and their applications in operator theory}, volume 150 of {\em Operator Theory: Advances and Applications}.
\newblock Birkh\"auser Verlag, Basel, 2004.

\bibitem{Schoenberg1947}
I.~J. Schoenberg.
\newblock On totally positive functions, {L}aplace integrals and entire functions of the {L}aguerre-{P}olya-{S}chur type.
\newblock {\em Proc. Nat. Acad. Sci. U.S.A.}, 33:11--17, 1947.

\bibitem{Schoenberg1951}
I.~J. Schoenberg.
\newblock On {P}\'olya frequency functions. {I}. {T}he totally positive functions and their {L}aplace transforms.
\newblock {\em J. Analyse Math.}, 1:331--374, 1951.

\bibitem{seidel2015semi}
Markus Seidel.
\newblock On semi-{F}redholm band-dominated operators.
\newblock {\em Integral Equations Operator Theory}, 83(1):35--47, 2015.

\bibitem{seip-wallsten}
K.~Seip and R.~Wallst{\'e}n.
\newblock Density theorems for sampling and interpolation in the {B}argmann-{F}ock space. {I}{I}.
\newblock {\em J. Reine Angew. Math.}, 429:107--113, 1992.

\bibitem{Seip1992}
Kristian Seip.
\newblock Density theorems for sampling and interpolation in the {B}argmann-{F}ock space. {I}.
\newblock {\em J. Reine Angew. Math.}, 429:91--106, 1992.

\bibitem{seipBook}
Kristian Seip.
\newblock {\em Interpolation and sampling in spaces of analytic functions}, volume~33 of {\em University Lecture Series}.
\newblock American Mathematical Society, Providence, RI, 2004.

\bibitem{ShinSun2009}
Chang~Eon Shin and Qiyu Sun.
\newblock Stability of localized operators.
\newblock {\em J. Funct. Anal.}, 256(8):2417--2439, 2009.

\bibitem{Tessera2010}
Romain Tessera.
\newblock Left inverses of matrices with polynomial decay.
\newblock {\em J. Funct. Anal.}, 259(11):2793--2813, 2010.

\bibitem{mathlib}
{The mathlib {C}ommunity}.
\newblock The {L}ean {M}athematical {L}ibrary.
\newblock In {\em Proceedings of the 9th {ACM} {SIGPLAN} International Conference on Certified Programs and Proofs}, CPP 2020, New Orleans, LA, USA, January 2020. ACM.

\bibitem{VinogradovUlitskaya2017}
O.~L. Vinogradov and A.~Yu. Ulitskaya.
\newblock Zeros of the {Z}ak transform of averaged totally positive functions.
\newblock {\em J. Approx. Theory}, 222:55--63, 2017.

\bibitem{walter94}
Gilbert~G. Walter.
\newblock {\em Wavelets and other orthogonal systems with applications}.
\newblock CRC Press, Boca Raton, FL, 1994.

\bibitem{zeeviZibulskiPorat1998multiwindow}
Yehoshua~Y. Zeevi, Meir Zibulski, and Moshe Porat.
\newblock Multi-window {G}abor schemes in signal and image representations.
\newblock In {\em Gabor analysis and algorithms}, Appl. Numer. Harmon. Anal., pages 381--407. Birkh\"auser Boston, Boston, MA, 1998.

\bibitem{zibulskiZeevi1997analysis}
Meir Zibulski and Yehoshua~Y. Zeevi.
\newblock Analysis of multiwindow {G}abor-type schemes by frame methods.
\newblock {\em Appl. Comput. Harmon. Anal.}, 4(2):188--221, 1997.

\end{thebibliography}

\end{document}